\documentclass[11pt]{amsart}
\usepackage{amsmath,amssymb,amsthm,multicol,mathtools,hyperref,dsfont,pinlabel,enumitem,wasysym}
\usepackage[normalem]{ulem}
\usepackage[usenames,dvipsnames]{color}
\usepackage{mathrsfs}

\newcommand\lb{\!\!\left\bracevert\!}
\newcommand\rb{\!\right\bracevert\!\!}
\newcommand\cut{\setminus\!\setminus}
\newcommand\wt{\widetilde}
\newcommand\wh{\widehat}
\newcommand\gem{\scalebox{.75}{${\gemini}$}}
\newcommand\ari{\scalebox{.75}{${\aries}$}}

\newcommand\Z{\mathbb{Z}}

\newcommand\R{\mathbb{R}}
\newcommand\Gs{\mathcal{G}}

\newcommand\lla{\left\langle}
\newcommand\rra{\right\rangle}

\newcommand\bbm{\begin{bmatrix}}
\newcommand\ebm{\end{bmatrix}}
\newcommand\red[1]{\color{red}#1\color{black}}
\newcommand\FG[1]{\color{ForestGreen}#1\color{black}}
\newcommand\violet[1]{\color{Violet}#1\color{black}}
\newcommand\Navy[1]{\color{NavyBlue}#1\color{black}}

\newcommand\white[1]{\color{white}#1\color{black}}
\newcommand\brown[1]{\color{Brown}#1\color{black}}
\newcommand\sepia[1]{\color{Sepia}#1\color{black}}

\newcommand\Cyan[1]{\color{Cyan}#1\color{black}}
\newcommand\Orange[1]{\color{BurntOrange}#1\color{black}}
\newcommand\Gray[1]{\color{Gray}#1\color{black}}

\newcommand\inter[1]{\overset{_\circ}{\nu}#1}
\newcommand\PosCrScr{\raisebox{-1pt}{\includegraphics[height=7pt]{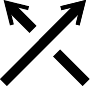}}}
\newcommand\NegCrScr{\raisebox{-1pt}{\includegraphics[height=7pt]{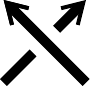}}}

\newcommand\PosCrRB{\raisebox{-2pt}{\includegraphics[height=11pt]{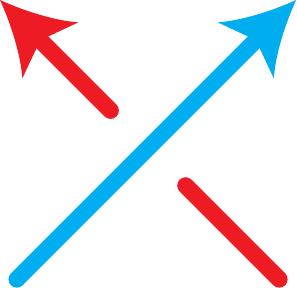}}}
\newcommand\NegCrRB{\raisebox{-2pt}{\includegraphics[height=11pt]{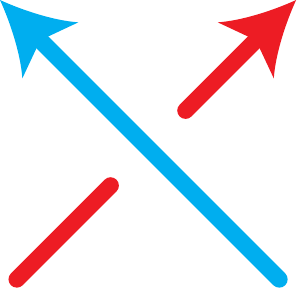}}}\newcommand\bs{\boldsymbol}
\newcommand\lk{\text{lk}}

\theoremstyle{plain}
\newtheorem{theorem}{Theorem}[section]
\newtheorem{lemma}[theorem]{Lemma}
\newtheorem{subl}[theorem]{Sublemma}
\newtheorem{obs}[theorem]{Observation}
\newtheorem{prop}[theorem]{Proposition}
\newtheorem{cor}[theorem]{Corollary}
\newtheorem{conjecture}[theorem]{Conjecture}
\newtheorem{corr}[theorem]{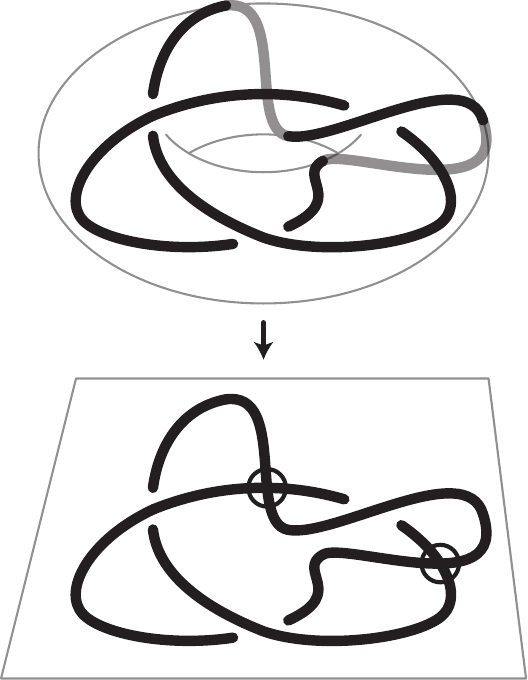}
\newtheorem{fact}[theorem]{Fact}
\newtheorem*{cblemma}{Crossing band lemma}
\newtheorem*{C:bk36}{Corollary 3.6 of \cite{bk20}}
\newtheorem*{T:bk1}{Proposition 3.8 of \cite{bk20}}
\newtheorem*{C:bk20}{Corollary 4.9 of \cite{bk20}}
\newtheorem*{T:411}{Theorem \ref{T:411}}
\newtheorem*{T:526}{Theorem \ref{T:526}}
\newtheorem*{T:kup}{Theorem 1 of \cite{kup03}}
\newtheorem*{T:mainthick}{Theorem \ref{T:mainthick}}
\newtheorem*{T:stable}{Theorem \ref{T:stable}}
\newtheorem*{T:virtual}{Theorem \ref{T:virtual}}
\newtheorem*{T:tait1}{Theorem \ref{T:tait1}}
\newtheorem*{T:tait2}{Theorem \ref{T:tait2}}
\newtheorem*{T:tait12v}{Theorem \ref{T:tait12v}}
\newtheorem*{T:ObviouslyPrimeEtc}{Theorem \ref{T:ObviouslyPrimeEtc}}
\newtheorem*{T:VSplitEtc}{Theorem 4.15 of \cite{primes}}
\newtheorem*{T:ess}{Theorem 1.1 of \cite{endess}}
\newtheorem*{C:NonUCS}{Corollary \ref{C:NonUCS}}

\theoremstyle{definition}
\newtheorem{convention}[theorem]{Convention}

\newtheorem{procedure}[theorem]{Procedure}
\newtheorem{notation}[theorem]{Notation}
\newtheorem{definition}[theorem]{Definition}

\newtheorem{example}[theorem]{Example}

\newtheorem{move}{Move}

\theoremstyle{remark}
\newtheorem{rem}[theorem]{Remark}

\numberwithin{equation}{section}


\setlength{\paperwidth}{8.5in} \setlength{\textwidth}{4.5in}
\oddsidemargin=1in \evensidemargin=1in

\begin{document}

\title{The virtual flyping theorem}

\author{Thomas Kindred}

\address{Department of Mathematics \& Statistics, Wake Forest University \\
Winston-Salem North Carolina, 27109} 

\email{thomas.kindred@wfu.edu}
\urladdr{www.thomaskindred.com}



\maketitle

\begin{abstract}
We extend the flyping theorem to alternating links in thickened surfaces and alternating virtual links.  The proof of the former result uses work of Boden--Karimi to adapt the author's geometric proof of Tait's 1898 flyping conjecture (first proved in 1993 by Menasco--Thistlethwaite), while the proof of the latter involves 
a diagrammatic correspondence recently introduced by the author in a related paper. 
In the process, we also extend a classical result of Gordon--Litherland, establishing an isomorphism between their pairing on a spanning surface and the intersection form on a 4-manifold constructed as a double-branched cover using that surface.
\end{abstract}


\section{Introduction}\label{S:intro}

P.G. Tait asserted in 1898 that all reduced alternating diagrams of a given prime nonsplit link in $S^3$ minimize crossings, have equal writhe, and are related by {\it flype} moves  (see Figure \ref{Fi:flype}) \cite{tait}. The first proofs came almost a century later, and all involved the Jones polynomial \cite{kauff,mur,mur87ii,this,menthis91,menthis93}.  In 2017, Greene gave the first {\it purely geometric} proof of part of the classical Tait conjectures \cite{greene}, and in 2020, the author gave the first purely geometric proof of Tait's flyping conjecture \cite{flyping}.

Recently, Boden, Chrisman, Karimi, and Sikora extended much of this to alternating links in thickened surfaces.  First, using generalizations of the Kauffman bracket, Boden--Karimi--Sikora proved that Tait's first two conjectures hold for alternating links in thickened surfaces \cite{bk18,bks19}.%
\footnote{Boden--Karimi proved Tait's first two conjectures for alternating links in thickened surfaces, with a few extra conditions \cite{bk18}, and with Sikora they extended those results to adequate links and removed the extra conditions \cite{bks19}.} 
Second, Boden--Chrisman--Karimi extended the Gordon--Litherland pairing to spanning surfaces in thickened surfaces \cite{bck21}. 
Third, Boden--Karimi applied this pairing to extend Greene's characterization of classical alternating links to links $L$ in thickened surfaces $\Sigma\times I$, %
proving that $L$ bounds connected definite surfaces of opposite signs if and only if $L$ is alternating and $(\Sigma\times I, L)$ is nonstabilized \cite{bk20}.%
\footnote{See \textsection\ref{S:Prime} for definitions of {\it stabilized}, {\it prime}, {\it locally {prime}}, {\it cellular}, {\it end-essential}, {\it definite}, and {\it removably nugatory}.}

\begin{figure}
\begin{center}
\labellist
\hair4pt
\pinlabel {\scalebox{.9}{$T_2$}} [c] at 57 77
\pinlabel {\scalebox{.9}{$T_1$}} [c]  at 92 5
\pinlabel {\scalebox{.9}{\scalebox{-1}[1]{$T_2$}}} [c]  at 217 77
\pinlabel {\scalebox{.9}{$T_1$}} [c]  at 257 5
\pinlabel {\scalebox{.9}{$\FG{\boldsymbol{\gamma}}$}} [c]  at 83 120
\pinlabel {\scalebox{.9}{$\FG{\boldsymbol{\gamma}}$}} [c]  at 350 25
\endlabellist
\includegraphics[width=\textwidth]{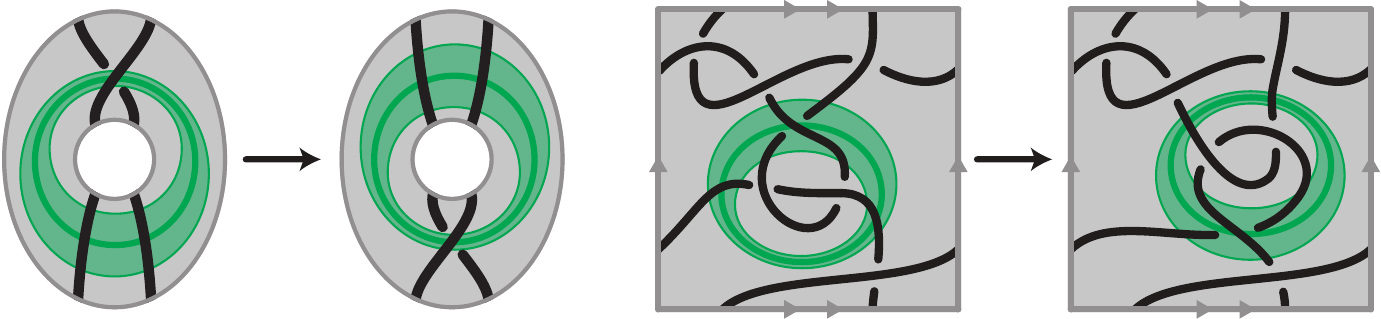}
\caption{A {\it flype} along an annulus $\FG{A=\nu\gamma}\subset \Sigma$.}
\label{Fi:flype}
\end{center}
\end{figure}

The first main result of this paper combines and adapts several of these recent developments to prove that the flyping theorem extends to alternating links in (nonstabilized) thickened surfaces. 

\begin{T:mainthick}
Let $D\subset \Sigma$ be a locally {prime}, %
 cellular alternating %
 diagram of a link $L$ in a thickened surface $\Sigma\times I$.%
   Then any other such diagram of $L$ is related to $D$ by flypes on $\Sigma$.
\end{T:mainthick}

The approach is parallel to that in \cite{flyping}, and indeed most of the arguments 
translate directly.  For some, which we mark with the symbol \gem, the statements and proofs hold without further comment. Appendix A lists pertinent cross-referencing information for these and other results marked with the symbol \ari. 
The upshot is a geometric proof of Theorem \ref{T:mainthick} and other generalized Tait conjectures:

\begin{T:tait1}[Part of Tait's extended first conjecture \cite{bk18,bks19}]
If $D,D'\subset\Sigma$ are alternating diagrams of a link $L\subset\Sigma\times I$, neither containing removable nugatory crossings, then $D$ and $D'$ have the same number of crossings.
\end{T:tait1}

\begin{T:tait2}[Tait's extended second conjecture \cite{bk18,bks19}]
All locally {prime}, cellular alternating diagrams of a given link $L\subset \Sigma\times I$ have the same writhe.
\end{T:tait2}

Section \ref{S:Virtual} extends Theorems \ref{T:tait1}, \ref{T:mainthick}, and \ref{T:tait2} to virtual links:

\begin{T:virtual}
Any two locally {prime}, alternating virtual diagrams%
\footnote{A virtual link diagram is {\it alternating} if its {\it classical crossings} alternate between over and under.}
of a given virtual link $K$ are related by non-classical R-moves and (classical) flypes.%
\footnote{A (classical) {\bf flype} on a virtual link diagram appears as in Figure \ref{Fi:flype}, where $T_1$ contains no virtual crossings.}
\end{T:virtual}

To prove this, we establish a new {\it diagrammatic} analog to the correspondence established by Kauffman, Kamada--Kamada, and Carter--Kamada--Saito between virtual links, equivalence classes of abstract links, and stable equivalence classes of links in thickened surfaces \cite{kauff98,kaka,cks02}; also see \cite{kup03}.  Given a virtual link diagram $V$, let $[V]$ denote its equivalence class under virtual (non-classical) R-moves.  We show that such classes $[V]$ correspond bijectively to abstract link diagrams and thus to cellularly embedded link diagrams on closed surfaces. 

As corollaries, we extend Theorems \ref{T:tait1} and \ref{T:tait2} to virtual diagrams, and we observe that connect sum is not a well-defined operation on virtual knots or links:

\begin{T:tait12v}
All locally {prime}, alternating diagrams of a given virtual link have the same crossing number and writhe.
\end{T:tait12v}

\begin{C:NonUCS}
Given any two non-classical, locally {prime}, alternating virtual links $V_1$ and $V_2$, there are infinitely many distinct virtual links that decompose as a connect sum of $V_1$ and $V_2$.
\end{C:NonUCS}

Before all this, in \textsection\ref{S:Back}, we introduce the required background regarding links in thickened surfaces. Some of this reviews the existing literature, and some of it is new.  

\section{Links and spanning surfaces in thickened surfaces}\label{S:Back}

\begin{convention}\label{Conv:Sigma}
Throughout, $\Sigma$ is a connected, closed, orientable surface 
with genus $g(\Sigma)>0$.%
%
\footnote{\cite{primes,endess} also allow $\Sigma$ to be disconnected with components of any genus.} 
We denote the intervals $[-1,1]$ and $[0,1]$ by $I$ and $I_+$, respectively. 
In $\Sigma\times I$, we identify $\Sigma$ with $\Sigma\times\{0\}$ and denote $\Sigma\times\{\pm1\}=\Sigma_\pm$.  
For a pair $(\Sigma,L)$ or $(\Sigma\times I, L)$, $L$ is a link in $\Sigma\times I$, and for a pair $(\Sigma,D)$, $D$ is a link diagram on $\Sigma$.
\end{convention}


\subsection{Alternating links in thickened surfaces}\label{S:Prime}
 
A pair $(\Sigma,L)$ is {\bf stabilized} if, for some circle%
\footnote{We use ``circle" as shorthand for ``simple closed curve."} 
 $\gamma\subset \Sigma$, $L$ can be isotoped so that it intersects each component of $(\Sigma\times I)\setminus(\gamma\times I)$ but not the annulus $\gamma\times I$; one can then {\it destabilize} the pair $(\Sigma,L)$ by cutting $\Sigma\times I$ along $\gamma\times I$ and attaching two 3-dimensional 2-handles in the natural way (this may disconnect $\Sigma$); the reverse operation is called {\it stabilization}. Equivalently, $(\Sigma,L)$ is {\it nonstabilized} if every diagram $D$ of $L$ on $\Sigma$ is {\bf cellularly embedded}, meaning that $D$ cuts $\Sigma$ into disks. 
 
 A pair $(\Sigma,L)$ is {\bf split} if $L$ has a disconnected diagram on $\Sigma$.  Note that if $(\Sigma,L)$ is split then it is also stabilized (as we assume that $\Sigma$ is connected). The converse is false. In fact, the number of split components is an invariant of stable equivalence classes.

Kuperberg's Theorem states that the stable equivalence class of $(\Sigma,L)$ contains a unique nonstabilized representative; this implies that when $(\Sigma,L)$ is nonsplit, $(\Sigma,L)$ is nonstabilized if and only if $\Sigma$ has {\it minimal genus} in this stable equivalence class.  


\begin{theorem}[Theorem 1 of \cite{kup03}]
If $(\Sigma,L)$ and $(\Sigma'\times I,L')$ are stably equivalent and nonstabilized, then there is a pairwise homeomorphism $(\Sigma\times I,L)\to(\Sigma'\times I,L')$.  
\end{theorem}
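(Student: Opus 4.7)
The plan is to deploy Correspondence~\ref{C:Diagrams}, which bijectively identifies equivalence classes $[V]$ of virtual link diagrams (modulo non-classical Reidemeister moves) with abstract link diagrams, and thus with cellularly embedded link diagrams on closed surfaces, up to pairwise homeomorphism. The strategy is to convert Kuperberg's statement---originally a $3$-manifold uniqueness theorem---into a combinatorial statement about virtual diagrams.

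Given nonstabilized stably equivalent pairs $(\Sigma\times I,L)$ and $(\Sigma'\times I,L')$, I would first choose any link diagrams $D\subset\Sigma$ and $D'\subset\Sigma'$, both of which are cellularly embedded by the nonstabilization hypothesis. Correspondence~\ref{C:Diagrams} assigns them virtual diagram classes $[V]$ and $[V']$. To obtain the desired pairwise homeomorphism $(\Sigma\times I,L)\cong(\Sigma'\times I,L')$, it suffices to find diagrams $D,D'$ of $L,L'$ whose classes satisfy $[V]=[V']$, since the bijection then produces a pairwise homeomorphism of the underlying cellularly embedded diagrams.

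Stable equivalence of the two pairs, combined with the standard identification of stable equivalence with virtual link equivalence (Kauffman, Kamada--Kamada, Carter--Kamada--Saito), gives that $V$ and $V'$ are connected by a finite sequence of classical and non-classical Reidemeister moves. Non-classical moves preserve $[V]$ by definition, and so correspond under Correspondence~\ref{C:Diagrams} to staying within a fixed pairwise-homeomorphism class of cellularly embedded diagrams. Classical moves on $V$ change $[V]$ but pull back, via the Correspondence, to Reidemeister moves on the cellularly embedded diagram that either preserve the underlying pair $(\Sigma\times I,L)$ up to pairwise homeomorphism or correspond to a (de)stabilization of the underlying surface. I would then invoke the nonstabilization hypothesis to rearrange the sequence so that all stabilizations occur at the start and all destabilizations occur at the end, with the middle portion consisting only of pair-preserving moves. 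Both endpoints being nonstabilized forces the numbers of stabilizations and destabilizations to balance, and a cancellation argument on the common stabilized intermediate then reduces the sequence to pair-preserving moves, yielding $(\Sigma\times I,L)\cong(\Sigma'\times I,L')$.

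The main obstacle is this rearrangement-and-cancellation step: one must verify that classical Reidemeister moves on virtual diagrams cleanly split into pair-preserving and (de)stabilizing types, that (de)stabilizations can be commuted past other moves at the cost of at most auxiliary stabilizations, and that a common stabilized abstract link diagram descends uniquely (up to pairwise homeomorphism) to each of its nonstabilized endpoints. This is essentially the diagrammatic analog of Kuperberg's geometric handle-cancellation argument; the payoff of passing through Correspondence~\ref{C:Diagrams} is that all of it becomes combinatorial manipulation of abstract link diagrams rather than $3$-manifold surgery.
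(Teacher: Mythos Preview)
Your overall strategy---pass to virtual diagrams via Correspondence~\ref{C:Diagrams} and analyze the sequence of generalized Reidemeister moves connecting them---is the same as the paper's. Where you diverge is in how you handle the classical moves.

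The paper's argument is much shorter than yours and makes no attempt at rearrangement or cancellation. After noting that virtual moves leave the corresponding cellular pair unchanged, it asserts that each classical move lifts directly to a Reidemeister move on $\Sigma$, with the parenthetical ``all such diagrams are cellularly embedded by assumption'' doing the work: since $(\Sigma,L)$ is nonstabilized, \emph{every} diagram of $L$ on $\Sigma$ is cellularly embedded, so performing each classical move on $\Sigma$ yields another cellular diagram of $L$ on $\Sigma$, and under the Correspondence this is precisely the next pair in the sequence. One therefore never leaves $\Sigma$, and the sequence terminates in a diagram $D''$ on $\Sigma$ with $(\Sigma,D'')\cong(\Sigma',D')$.

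Your proposal instead allows the intermediate surfaces to vary and then seeks to rearrange and cancel the resulting (de)stabilizations. The gap is exactly where you say it is: you do not carry out the rearrangement-and-cancellation, and doing so is not a formality. Proving that (de)stabilizations can be commuted past arbitrary Reidemeister moves and past each other, and that a common stabilized abstract diagram descends uniquely (up to pairwise homeomorphism) to each of its nonstabilized endpoints, is essentially the full content of Kuperberg's theorem. As you yourself acknowledge, this route reproduces Kuperberg's geometric handle-cancellation in diagrammatic language rather than bypassing it, so it is not a new proof but a restatement. What you are missing is the paper's direct use of the nonstabilized hypothesis at \emph{every} intermediate step---not just at the endpoints---which keeps the surface fixed throughout and removes any need for rearrangement.
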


If $L$ is nonsplit %
and $g(\Sigma)>0$%
, then $(\Sigma\times I)\setminus L$ is irreducible, as $\Sigma\times I$ is always irreducible, since its universal cover is $\R^2\times \R$.%
\footnote{For more detail, see Proposition 12 of \cite{bk20}; the proof cites \cite{csw14}.}
The converse of this, too, is false,%
\footnote{If $(\Sigma_i\times I,L_i)$ is nonsplit (implying that $\Sigma_i\times I\setminus L_i$ is irreducible) for $i=1,2$, then choose disks $X_i\subset \Sigma_i$ with $(X_i\times I)\cap L_i=\varnothing$ and construct the connect sum $\Sigma=(\Sigma_1\setminus\text{int}(X_1))\cup(\Sigma_2\setminus\text{int}(X_2))=\Sigma_1\#\Sigma_2$. Let $L=L_1\sqcup L_2\subset\Sigma\times I$.  Then $(\Sigma,L)$ is split. Yet, $(\Sigma\times I)\setminus L$ is irreducible by Observation \ref{O:IrreducibleSplitUnion}.}
due to the next observation, which follows from a standard innermost circle argument:

\begin{obs}\label{O:IrreducibleSplitUnion}
If $(\Sigma_i\times I)\setminus L_i$ is irreducible for $i=1,2$ and $\Sigma=\Sigma_1\#_\gamma\Sigma_2$ with $L=L_1\sqcup L_2\subset \Sigma\times I$, where the annulus $A=\gamma\times I$ separates $L_1$ from $L_2$ in $\Sigma\times I$, then $(\Sigma\times I)\setminus L$ is irreducible.
\end{obs}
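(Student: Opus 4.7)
The plan is to run a standard innermost-circle argument on $S \cap A$ for an arbitrary embedded $2$-sphere $S \subset (\Sigma \times I) \setminus L$. Put $S$ transverse to $A = \gamma \times I$, and induct on $n = |S \cap A|$. Write $M_j = (\Sigma_j \setminus \text{int}(X_j)) \times I$ for the two pieces of $\Sigma \times I$ cut off by $A$; each $M_j$ embeds naturally in $\Sigma_j \times I = M_j \cup_A (X_j \times I)$, with $X_j \times I$ a $3$-ball on the opposite side of $A$.

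For the base case $n = 0$, the sphere $S$ lies in $M_j \setminus L_j$ for some $j$. Viewing $S$ inside the irreducible manifold $(\Sigma_j \times I) \setminus L_j$ produces a $3$-ball $B$ with $\partial B = S$. Arranging $S$ (and hence $B$) to avoid $\partial(\Sigma_j \times I)$, I use that $A$ is connected and $A \cap S = \varnothing$ to conclude that either $A \subset B$ or $A \cap B = \varnothing$. The former is impossible, since $\partial A = \gamma \times \{\pm 1\} \subset \partial(\Sigma_j \times I)$ is disjoint from $B$. Thus $A \cap B = \varnothing$, and connectedness of $B$ forces $B \subset M_j \setminus L_j \subset (\Sigma \times I) \setminus L$, giving the desired ball.

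For the inductive step, pick $c \subset S \cap A$ innermost on $S$, bounding a disk $D \subset S$ with $\text{int}(D) \cap A = \varnothing$; say $D \subset M_j \setminus L_j$. Assuming $\gamma$ is essential in $\Sigma$ (the degenerate case, where one $\Sigma_i = S^2$, reduces directly to the hypothesis on $\Sigma_{3-i}$), the annulus $A$ is $\pi_1$-injectively embedded in $\Sigma \times I$ and hence incompressible in $(\Sigma \times I) \setminus L$. So $c$ bounds a disk $D' \subset A$; pushing $D \cup D'$ slightly off $A$ into the interior of $M_j \setminus L_j$ yields a sphere satisfying the base-case hypothesis, whose bounding $3$-ball provides an isotopy of $D$ across $D'$ that strictly reduces $|S \cap A|$. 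Induction on $n$ concludes the proof.

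The main obstacle is verifying in the base case that the irreducibility ball $B$ sits on the $M_j$ side of $A$ and does not engulf the complementary $3$-ball $X_j \times I$. This is precisely the content of the disjointness $\partial A \cap B = \varnothing$, which rests on $B$ being an interior ball in $\Sigma_j \times I$ while $A$ is properly embedded with its boundary on $\partial(\Sigma_j \times I)$.
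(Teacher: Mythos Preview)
Your approach is exactly what the paper indicates (``follows from a standard innermost circle argument''), and since the paper gives no further detail, your write-up is already more thorough than the original.

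Two small points. First, in the inductive step the ball bounded by $D\cup D'$ need not have interior disjoint from $D^*=S\setminus D$: this fails whenever other circles of $S\cap A$ lie inside $D'$, since $D^*$ can then enter the ball through $\text{int}(D')$. So the isotopy of $D$ across that ball is not immediately available as an isotopy of the embedded sphere $S$. The standard remedy is either to take $c$ innermost on $A$ (so that $\text{int}(D')\cap S=\varnothing$, whence $\text{int}(B)\cap D^*=\varnothing$ since $D^*$ could only enter $B$ through $D'$), or to replace the isotopy by a surgery-and-reassembly argument. Second, the degenerate case $\Sigma_i=S^2$ cannot occur under the hypotheses: $(S^2\times I)\setminus L_i$ is never irreducible, because a $2$-sphere parallel to a boundary component bounds no $3$-ball on either side. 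So your parenthetical reduction there is moot (and, as stated, would not quite work anyway, since irreducibility of $(\Sigma_{3-i}\times I)\setminus L_{3-i}$ alone does not control the extra link $L_i$).
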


We call $(\Sigma,D)$ {\bf cellular} if $D$ cuts $\Sigma$ into disks. Note:

\begin{fact}[\cite{oz06,bk20}; Proposition 5.1 of \cite{primes}]\label{F:CB}
Suppose $D\subset \Sigma$ is a cellular alternating diagram of a link $L\subset \Sigma\times I$. Then $(\Sigma,D)$ is checkerboard colorable, and $L$ is nullhomologous over $\Z/2$.
\end{fact}

We will use this result of Boden--Karimi and the generalization that follows:

\begin{fact}[Corollary 3.6 of \cite{bk20}]\label{F:bk36}
If 
$(\Sigma,L)$ has a cellular alternating diagram, then $(\Sigma,L)$ is nonsplit and nonstabilized.
\end{fact}

\begin{cor}[Corollary 2.4 of \cite{primes}]\label{C:bk36++}
Suppose $(\Sigma,L)$ has an alternating diagram $D\subset\Sigma$. Then $(\Sigma,L)$ is nonsplit if and only if $D$ is connected, and $(\Sigma,L)$ is nonstabilized if and only if $D$ is cellular.
\end{cor}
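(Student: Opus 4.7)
The plan is to reduce both equivalences to the fully alternating case (Corollary 3.6 of \cite{bk20}) by first cellularizing $D$ without changing the stable equivalence class of $(\Sigma,L)$. Given the alternating diagram $D\subset\Sigma$, construct a closed (possibly disconnected) surface $\Sigma^*$ with $D$ cellularly embedded by removing the interior of each non-disk face $F$ of $D$ in $\Sigma$ and capping the resulting boundary circles with disks. In the thickened-surface picture, each such surgery is a sequence of destabilizations along curves in $\mathrm{int}(F)$, which are automatically disjoint from $L$, so $(\Sigma,L)$ and $(\Sigma^*,L)$ lie in the same stable equivalence class. The connected components of $\Sigma^*$ correspond bijectively to the components $D_1,\dots,D_k$ of $D$, and each $D_i\subset\Sigma^*_i$ is fully alternating.

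The nonstabilized equivalence then follows quickly. The forward direction is built into the equivalent definition of nonstabilized in Section~\ref{S:Prime}: every diagram of $L$ on $\Sigma$ is cellularly embedded. For the reverse, if $D$ is alternating and cellular on connected $\Sigma$, then $D$ must be connected, since the boundary of each cellular face is a connected subset of $D$ and hence lies in a single component of $D$; otherwise $\Sigma$ would decompose as the disjoint union of the nonempty closed sets $S_i=D_i\cup\bigcup\{\bar F : \partial\bar F\subset D_i\}$ for $i=1,2$, contradicting connectedness. Thus $D$ is fully alternating, and Corollary 3.6 of \cite{bk20} applies.

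For the nonsplit equivalence, apply Corollary 3.6 of \cite{bk20} to each component $(\Sigma^*_i,L_i)$: each $L_i$ is nonsplit in $\Sigma^*_i\times I$, so $(\Sigma^*,L)$ has exactly $k$ split components. Since split count is a stable invariant (as stated right after the definition of stabilization), $(\Sigma,L)$ also has $k$ split components, and is therefore nonsplit iff $k=1$ iff $D$ is connected.

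The main obstacle is verifying the stable equivalence $(\Sigma,L)\sim(\Sigma^*,L)$: when a non-disk face $F$ has positive genus or multiple boundary components, the destabilizing curves in $\mathrm{int}(F)$ must be chosen in a careful order so that their cumulative effect replaces $F$ by a union of disk faces adjacent to the appropriate components of $D$.
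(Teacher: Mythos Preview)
The paper states this corollary without proof, presenting it as an immediate generalization of Corollary~3.6 of \cite{bk20}, so there is no argument to compare against.  Your proof is correct and supplies the missing details cleanly.  In particular, your reduction to the fully alternating case via destabilization, together with the stable invariance of the number of split components (asserted in \S\ref{S:Prime}), handles the nonsplit equivalence, and your observation that a cellularly embedded diagram on a connected surface must itself be connected handles the reverse implication of the nonstabilized equivalence.

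The only point that deserves a further sentence is the one you already flag: realizing the passage from $(\Sigma,L)$ to $(\Sigma^*,L)$ as a sequence of moves within the stable equivalence class.  This is indeed routine.  For each non-disk face $F$ with genus $g$ and $b$ boundary components, choose a system of $2g+b-1$ pairwise disjoint simple closed curves in $\mathrm{int}(F)$ whose complement in $F$ is a union of $b$ annular collars of $\partial F$; surgering along these curves (in any order) replaces $F$ by $b$ disks, one per boundary component.  Each such curve is disjoint from $D$, hence the corresponding vertical annulus misses $L$, so the surgery is a destabilization in the sense of \cite{cks02} (possibly followed by discarding a closed component carrying no link, which is also permitted under stable equivalence).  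That each component of $\Sigma^*$ carries exactly one component of $D$ then follows from the very argument you give in the nonstabilized part.
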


Following \cite{primes}, we call a checkerboard colorable pair $(\Sigma,D)$ {\bf pairwise prime} if any pairwise connect sum decomposition $(\Sigma,D)=(\Sigma_1,D_1)\#(\Sigma_2,D_2)$ has $(\Sigma_i,D_i)=(S^2,\bigcirc)$ for either $i=1,2$.  
Likewise, given $(\Sigma,L)$ where $L$ is nullhomologous over $\Z/2$, we call $(\Sigma,L)$ {\it pairwise prime} if every annular connect sum decomposition $(\Sigma,L)=(\Sigma_1,L_1)\#(\Sigma_2,L_2)$ 
 is trivial: $(\Sigma_i,L_i)=(S^2,\bigcirc)$ for either $i=1,2$.\footnote{Annular connect sum $(\Sigma_1,L_1)\#(\Sigma_2,L_2)=(\Sigma,L)$ is a connect sum of surfaces, $\Sigma_1\#\Sigma_2=\Sigma$, thickened up, which restricts to a connect sum of 1-manifolds $L_1\#L_2=L$. See \cite{kauff98,mat,primes}.}
 \footnote{The definitions of pairwise primeness are more complicated without the assumptions related to checkerboard colorability; see \cite{primes}.}
Thus, such $(\Sigma,L)$ is pairwise prime if and only if, whenever $\gamma\subset\Sigma$ is a separating curve and $L$ is isotoped to intersect the annulus $\gamma\times I$ in two points, $\gamma$ bounds a disk $X\subset\Sigma$ such that $L$ intersects $X\times I$ in a single unknotted arc. 
 
 Howie-Purcell call $(\Sigma,D)$ {\it weakly prime} if, for every pairwise connect sum decomposition $(\Sigma,D)=(\Sigma,D_1)\#(S^2,D_2)$,  
either 
$D_2=\bigcirc$ is the trivial diagram of the unknot %
 or $(\Sigma,D_1)=(S^2,\bigcirc)$ \cite{hp20}; following \cite{primes}, we call such $D$ {\bf locally prime}. %
We call $(\Sigma,L)$ {\it locally prime} if, for every pairwise connect sum decomposition $(\Sigma,L)=(\Sigma,L_1)\#(S^2,L_2)$,
 either 
 $L_2=\bigcirc$ is the unknot %
 or $(\Sigma,L_1)=(S^2,\bigcirc)$ \cite{hp20}%
 .\footnote{A third notion of primeness for  $D$ on $\Sigma$ also appears in the literature: Ozawa calls $(\Sigma, D)$ {\it strongly prime} if every circle on $\Sigma$ (not necessarily separating) that intersects $D$ in two generic points also bounds a disk in $\Sigma$ which contains no crossings of $D$  \cite{oz06}.}

As in the classical case \cite{men84}, certain diagrammatic conditions constrain an alternating link $L$ as one might wish:

\begin{theorem}[\cite{oz06,bk20,adamsetal,primes}]\label{T:ObviouslyPrimeEtc}
If $D\subset \Sigma$ is a cellular alternating diagram of a link $L\subset \Sigma\times I$, then (i) $L$ is nonsplit, so in particular, $(\Sigma\times I)\setminus L$ is irreducible %
if $g(\Sigma)>0$; (ii) if $(\Sigma,D)$ is locally {prime}, then $(\Sigma,L)$ is locally {prime}; and (iii) if $(\Sigma,D)$ is pairwise prime, then $(\Sigma,L)$ is pairwise prime.%
%
\end{theorem}

Part (i) was proven by Ozawa in \cite{oz06} and by Boden-Karimi in \cite{bk20}. 
Part (ii) was proven by Adams et al in \cite{adamsetal} and generalized by Howie-Purcell in \cite{hp20}. Part (iii) is one of the main results of \cite{primes}, which also gives new proofs of (i)-(ii).

\subsubsection{End-essential spanning surfaces}
Part (i) of Theorem \ref{T:ObviouslyPrimeEtc} implies that $L$ has {\it spanning surfaces}: embedded, unoriented, compact surfaces $F\subset \Sigma\times I$ with $\partial F=L$; while we {\it do not} require 
$F$ to be connected, we do require that each component of $F$ has nonempty boundary. 
%
By deleting the interior of a regular neighborhood of $L$ from $F$ and $\Sigma\times I$, one may instead view $F$ as a properly embedded surface in the link exterior $(\Sigma\times I)\setminus\inter L$%
\footnote{Throughout, given a manifold $X$ and a submanifold $Y\subset X$,
$\nu Y$ denotes a {\it closed} regular neighborhood of $Y$ in $X$.}%
%
\footnote{We also assume that $\partial F$ is transverse on $\partial \nu L$ to each meridian, where a meridian is the preimage of a point in $L$ under the bundle map $\nu L\to L$.}
We take this view throughout, except in Definition \ref{D:essential}, Note \ref{N:S*}, and \textsection\ref{S:4Ball}.


If $(\Sigma,D)$ is a cellular alternating diagram of $(\Sigma,L)$, then it is possible to orient each disk of $\Sigma\setminus D$ so that, under the resulting boundary orientation, over- and under-strands are oriented respectively toward and away from crossings.  Since $\Sigma$ is orientable, these orientations determine a checkerboard coloring of $\Sigma\cut D$,\footnote{For compact $X,Y\subset \Sigma\times I$, $X{\cut} Y$ denotes the metric closure of $X\setminus Y$; see Note 7 of \cite{flyping} for a precise definition.} i.e. a way of shading the disks of $\Sigma\cut D$ black and white so that regions of the same shade abut only at crossings.\footnote{Interestingly, cellular alternating link diagrams on nonorientable surfaces are never {\bf checkerboard colorable}.} One can use this checkerboard coloring to construct {\it checkerboard surfaces} $B$ and $W$ for $L$, where $B$ projects into the black regions, $W$ projects into the white, and $B$ and $W$ intersect in {\it vertical arcs} which project to the the crossings of $D$.
The main result of \cite{endess} is that these checkerboard surfaces satisfy several convenient properties: 

\begin{definition}\label{D:essential}
Let $F\subset \Sigma\times I$ be a spanning surface for $(\Sigma,L)$.  
Denote $M_F=(\Sigma\times I)\cut F$, and use the natural map $h_F:M_F\to\Sigma\times I$ to denote $h_F^{-1}(L)=\wt{L}$, $h_F^{-1}(\Sigma_\pm)=\wt{\Sigma_\pm}$, and $h_F^{-1}(F)=\wt{F}$, so that $h_F:\wt{L}\to L$ and $h_F:\wt{\Sigma_\pm}\to\Sigma_\pm$ are homeomorphisms and $h_F:\wt{F}\setminus \wt{L}\to\text{int}(F)$ is a 2:1 covering map.  
Then we say that $F$ is:
\begin{enumerate}[label=(\alph*)]
\item {\bf incompressible} if any circle
$\gamma\subset \wt{F}\setminus\wt{L}$ that bounds a disk in $M_F$  
also bounds a disk in $\wt{F}\setminus\wt{L}$.%
\footnote{$F$ is incompressible if and only if $F$ is $\pi_1$-injective, meaning that inclusion $\text{int}(F)\hookrightarrow (\Sigma\times I)\setminus L$ induces an injection of fundamental groups (for all possible choices of basepoint).}
\item {\bf end-incompressible} if 
any circle %
$\gamma\subset \wt{F}\setminus\wt{L}$ that is parallel in $M_F$ to $\wt{\Sigma_\pm}$ bounds a disk in $\wt{F}\setminus\wt{L}$.
\item {\bf $\bs{\partial}$-incompressible} if, for any circle
$\gamma\subset \wt{F}$ with $|\gamma\cap\wt{L}|=1$ that bounds a disk in $M_F$, 
$\gamma\cut\wt{L}$ is parallel in $\wt{F}\cut\wt{L}$ into $\wt{L}$.
\item {\bf essential} if $F$ satisfies (a) and (c).
\item {\bf end-essential} if $F$ satisfies (b) and (c).\footnote{Note that any end-essential surface is essential. Observe moreover that the converse is true when $\Sigma$ is a 2-sphere. 
}
\end{enumerate}
\end{definition}

A crossing $c$ of a diagram $D\subset \Sigma$ is {\bf removably nugatory} if there is a disk $X\subset \Sigma$ such that $\partial X\pitchfork D=\{c\}$; in that case, one can remove $c$ from $D$ via a flype and a Reidemeister-1 move.  %
No  locally {prime} cellular diagram has removable nugatory crossings. Also, any diagram $(\Sigma,D)$ with a removable nugatory crossing, has at least one $\partial$-compressible checkerboard surface.  Conversely:

\begin{theorem}[Theorem 1.1 of \cite{endess}]\label{T:EndEss}
If $D\subset\Sigma$ is a cellular alternating diagram without removable nugatory crossings, then both checkerboard surfaces from $D$ are {end-essential}.
\end{theorem}




\begin{prop}
\label{P:BdryParallel}
Suppose $F_\pm$ are definite surfaces of opposite signs spanning a link $L\subset \Sigma\times I$ and $F_+\cap F_-$ consists only of arcs, none of which are $\partial$-parallel in both $F_+$ and $F_-$. If $F_-$ (resp. $F_+$) is $\partial$-incompressible, then no arc of $F_+\cap F_-$ is $\partial$-parallel in $F_+$ (resp. $F_-$).\gem
\end{prop}

\begin{prop}
\label{P:Ess}
If an essential surface $F$ spanning $(\Sigma,L)$ contains an arc $\beta$ which is parallel in $(\Sigma\times I)\cut(F\cup\nu L)$ to an arc $\alpha\subset\partial\nu L\cut\partial F$, then $\alpha$ is parallel in $\partial \nu L$ to $\partial F$.\gem
\end{prop}

\begin{obs}
\label{O:ArcW}
Suppose $B,W$ are the checkerboard surfaces of a cellular alternating diagram $D\subset\Sigma$ of a link $L\subset \Sigma\times I$. Any properly embedded arc in $W$ that is disjoint from $B$ and separating in $W$ is either $\partial$-parallel in $W$ or isotopic in $W$ to a vertical arc of $B\cap W$. Likewise with $B$ and $W$ reversed.\gem
\end{obs}

\begin{rem}
\label{R:CBEss}
Observation \ref{O:ArcW} implies in particular that no vertical arc from a locally {prime}, cellular alternating diagram is $\partial$-parallel in either checkerboard surface.\gem
\end{rem}

\subsubsection{Flype-related diagrams}\label{S:WellDefined}

\begin{definition}
\label{D:Flype}
If $D\subset \Sigma$ is a link diagram and $\gamma\subset \Sigma$ is an inessential circle that intersects $D$ transversally in three points, exactly one of them a crossing point, $c$, then we call the circle $\gamma$ a {\bf flyping circle} for $D$.
Up to mirror symmetry, $D$ and $\gamma$ appear as shown far left in Figure \ref{Fi:flype} ($D$ intersects the disk component of $\Sigma\setminus\inter\gamma$ in a tangle $T_2$ and intersects the other component in a ``higher-genus tangle'' $T_1$), so
one can {\bf flype} $D$ along $\gamma$  as shown: this move fixes $T_1$, switches which pair of strands cross within $\nu\gamma$, and changes $T_2$ by reflecting the underlying projection and reversing all crossing information.\ari 
\end{definition}




\begin{obs}
\label{O:Flype}
If $D\to D'$ is a flype, then $D$ and $D'$ represent the same link $L$ and have the same number of crossings. If $D$ is oriented then $D$ and $D'$ have the same writhe.%
\footnote{The {\bf writhe} of $D$ is $w_D=\left| \PosCrScr\right|-\left|  \NegCrScr\right|$.}
If $D$ is cellular alternating (resp. locally {prime}), then so is $D'$.\ari
\end{obs}

\begin{rem}\label{R:Entire}
In the classical setting, the tangle $T_1$ in Figure \ref{Fi:flype} might contain no crossings, in which case the flype has the effect of changing $D$ to its mirror image and then reversing all crossings; one may think of this move as leaving $D$ unchanged and viewing it from the opposite side of $\Sigma$ (in \cite{flyping}, we call such a flype an {\it entire flype}). By contrast (by an euler characteristic argument), no cellular checkerboard colorable diagram on a surface of positive genus does. Thus, while, as in \cite{flyping}, we regard two diagrams $D,D'\subset \Sigma$ as {\it equivalent} iff they are related by planar isotopy and possibly an entire flype, the latter possibility will be vacuous.
\end{rem}

\begin{figure}
\begin{center}
\includegraphics[width=.5\textwidth]{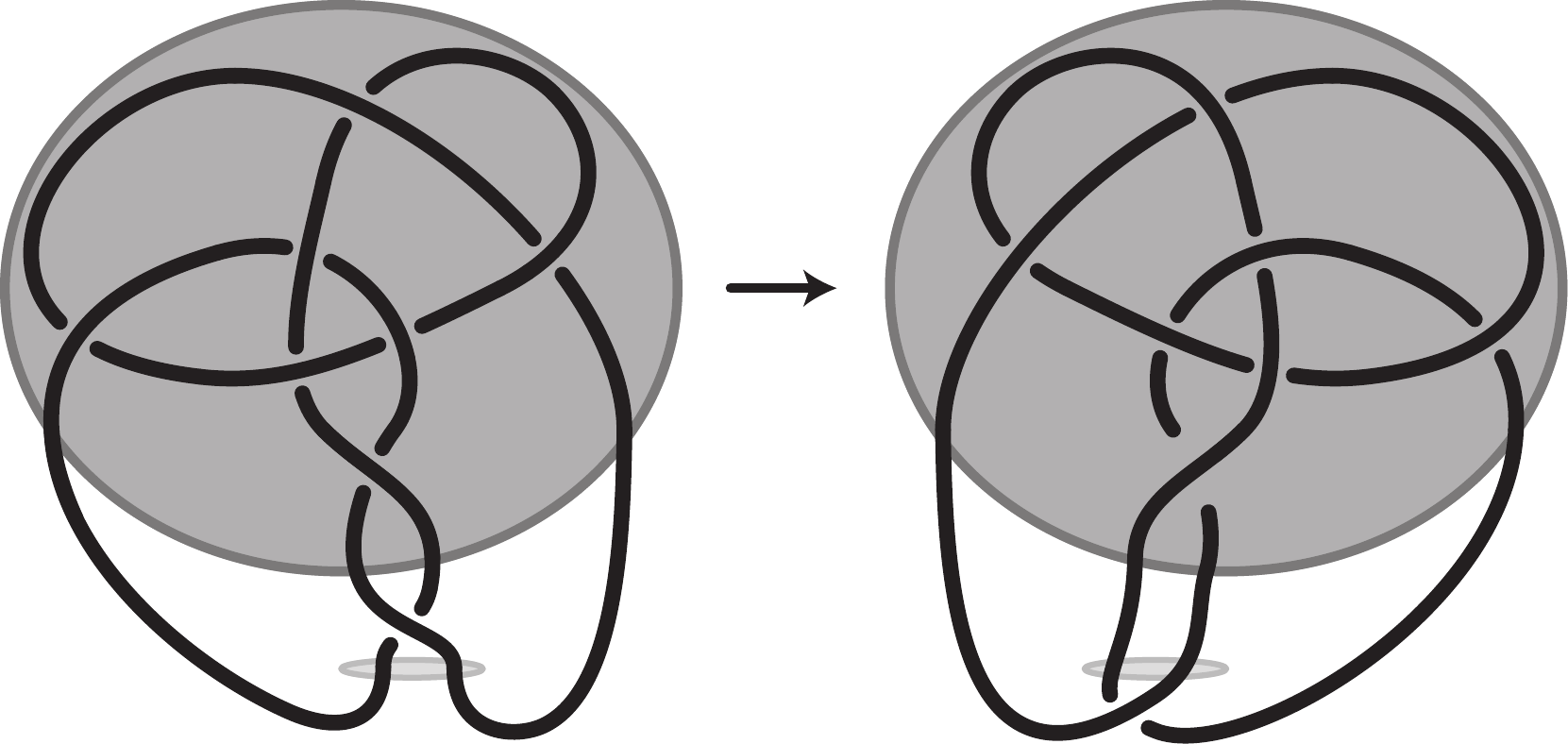}\hfill
\includegraphics[width=.45\textwidth]{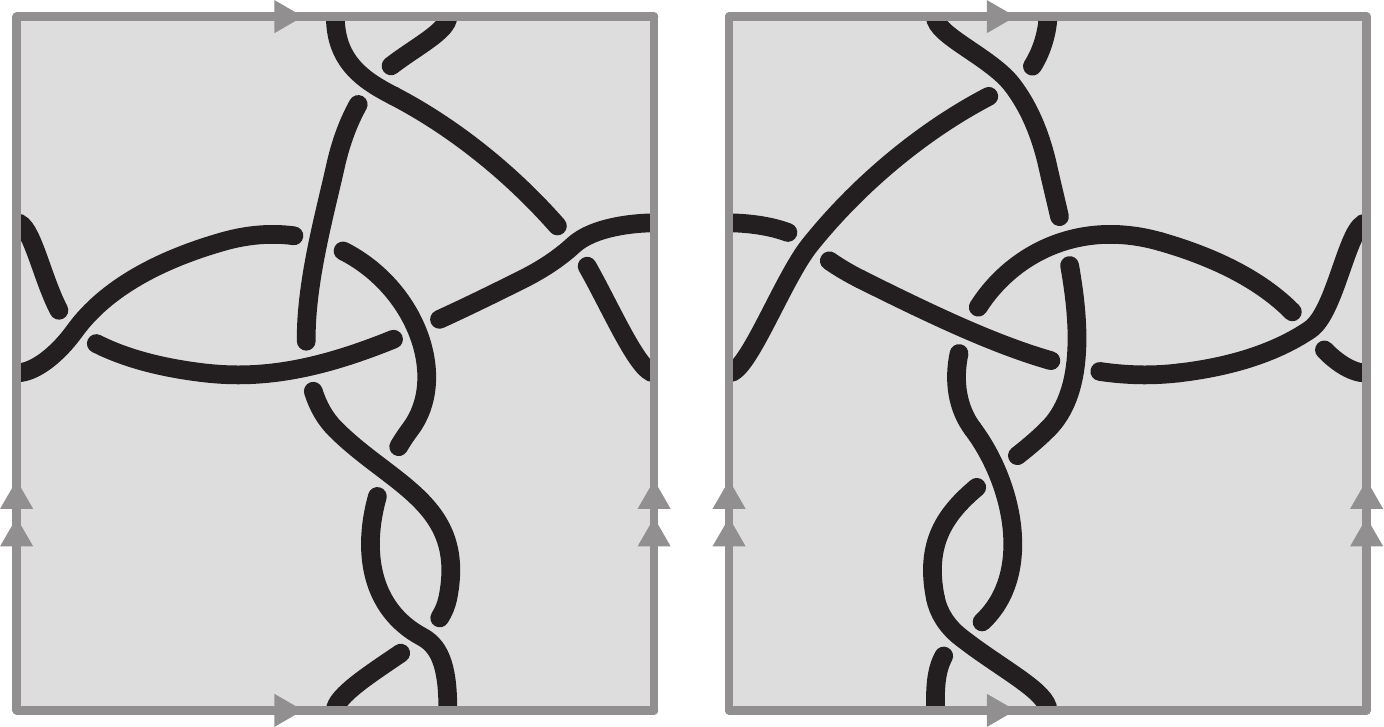}
\caption{Left: an {\it entire flype} of a 
 diagram of the knot 
$8_{17}$. Right: Corollary \ref{C:Entire} will imply that these links are non-isotopic; see Example \ref{Ex:Entire}.}
\label{Fi:Entire}
\end{center}
\end{figure}

\subsection{Definite surfaces}\label{S:definite}
  
\subsubsection{Linking numbers and slopes}\label{S:lk}

We adopt the notion of {\it generalized linking numbers} which was first defined for arbitrary 3-manifolds with nonempty boundary in \cite{ct07} and applied in the context of thickened surfaces in \cite{bck21,bk20}.  
The generalized linking number of disjoint multicurves%
\footnote{We call a disjoint union of embedded, {\it oriented} circles a {\bf multicurve}.}
$\red{\alpha},\Cyan{\beta}\subset\Sigma\times I$ is
\begin{equation}\label{E:lk}
\lk_\Sigma(\red{\alpha},\Cyan{\beta})=|\PosCrRB|-|\NegCrRB|.
\end{equation}
This linking pairing, taken  {\it relative to $\Sigma_+$}, is {\it asymmetric}: denoting intersection number on $\Sigma$ by $\cdot_\Sigma$ and projection $p_\Sigma:\Sigma\times I\to \Sigma$,
\[\lk_\Sigma(\red{\alpha},\Cyan{\beta})-\lk_\Sigma(\Cyan{\beta},\red{\alpha})=p_\Sigma(\red{\alpha})\cdot_\Sigma p_\Sigma(\Cyan{\beta}).\]
\color{black}

If $F$ spans a link $L=\bigsqcup_iL_i\subset \Sigma\times I$ 
and each $\widehat{L_i}$ is a co-oriented pushoff of $L_i$ in $F$, then 
we call $s(F)=\sum_{i}
\text{lk}(L_i,\widehat{L_i})$ the {\bf slope} of $F$.

\subsubsection{The Gordon--Litherland pairing}\label{S:GL}
\begin{figure}
\begin{center}
\includegraphics[width=.25\textwidth]{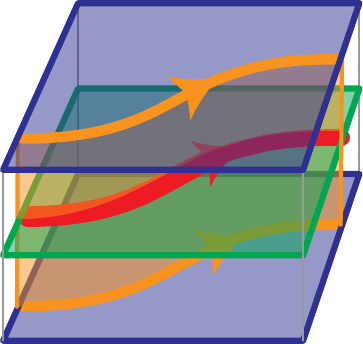}
\caption{A multicurve $\red{\gamma}\subset \FG{F}$ and $\Orange{\wt{\gamma}}\subset \Navy{\wt{F}}$: $[\wt{\gamma}]=\tau[\gamma]$.}\label{Fi:GL}
\end{center}
\end{figure}
Given a surface $F$ spanning a link $L\subset \Sigma\times I$, take $\nu F$ in the link exterior $(\Sigma\times I)\setminus \inter L$
with projection ${p}:{\nu}F\to F$,
such that $p^{-1}(\partial F)=\nu F\cap\partial\nu L$, and denote the {\it frontier} $\wt{F}=\partial\nu F\cut\partial\nu L$ and %
{\it transfer map} $\tau:H_1(F)\to H_1(\wt{F})$ (see Figure \ref{Fi:GL}). 
Following Boden--Chrisman--Karimi, the (generalized) {\it Gordon--Litherland pairing} (relative to $\Sigma_+$)
 is the symmetric bilinear mapping $\lla\cdot,\cdot\rra_F:H_1(F)\times H_1(F)\to\Z$
given by   \cite{gordlith,bck21}:
\[\lla a,b\rra_F=\frac{1}{2}\left(\lk_{\Sigma}(\tau a,b)+\lk_\Sigma(\tau b,a)\right).\]
\color{black}
Given a multicurve $\gamma\subset F$ representing $g\in H_1(F)$, we denote $\lla g,g\rra_F=\lb g\rb_F$ and call $\frac{1}{2}\lb g\rb_F$ the {\it framing} of $\gamma$ in $F$. %
Given a basis $\mathcal{B}=(a_1,\hdots,a_n)$ for $H_1(F)$, the {\it Goeritz matrix} $G=(x_{ij})\in\Z^{n\times n}$, $x_{ij}=\lla a_i,a_j\rra_F$, represents $\lla\cdot,\cdot\rra_F$ with respect to $\mathcal{B}$. %
Denoting the signature of $G$ by $\sigma(F)$, 
the quantity 
\begin{equation}\label{E:Sig}
\sigma_F(L)=\sigma(F)-\frac{1}{2}s(F), 
\end{equation}
depends only on the $S^*$ equivalence class of $F$; whenever $(\Sigma,L)$ is nonsplit with diagram $(\Sigma,D)$ there are exactly two such classes, each represented by a checkerboard surface of $D$ \cite{bck21}
.\footnote{\label{N:S*}$S^*$ equivalence is generated by attaching and deleting tubes and crosscaps \cite{gordlith} and thus respects relative homology classes.  The checkerboard surfaces $F$ and $F'$ of $D$ 
satisfy $[F]+[F']=[\Sigma]$ in $H_2(\Sigma\times I,L;\Z/2)$, so $[F]\neq [F']$; hence, $F$ and $F'$ are not $S^*$ equivalent.  For the converse, following the classical approach of Yasuhara \cite{yas}, put an arbitrary spanning surface in disk-band form, attach tubes to make it a checkerboard surface for some diagram, and then perform Reidemester moves (requiring more tubing and crosscapping moves).
}

\subsubsection{Definiteness characterizes alternating links.}\label{S:Def}

A spanning surface $F$ is
{\it positive-} (resp. {\it negative-}) {\it definite} if 
$\lla\alpha,\alpha\rra_F>0$ (resp. $\lla \alpha,\alpha\rra_F<0$) for all nonzero $\alpha\in H_1(F)$ \cite{greene}.%
\footnote{$F$ is positive-definite iff
$\sigma(F)=\beta_1(F)$ or equivalently iff each multicurve in $F$ either has positive framing in $F$ or bounds an orientable subsurface of $F$.
}%
\footnote{When $|\partial F|\leq 2 $, every primitive $g\in H_1(F)$ is represented by an oriented circle, but this is not true in general: e.g. take $F$ to be an oriented pair of pants and  $g$ the sum of two boundary components, one with the boundary orientation. 
}

Adapting work of Greene from the classical setting 
\cite{greene}, Boden--Karimi characterized nonstabilized alternating links in (and diagrams on) thickened surfaces in terms of definite surfaces:







\begin{fact}[Proposition 3.8 of \cite{bk20}]\label{F:PGreene}
A cellular checkerboard colorable link diagram $D\subset\Sigma$ is alternating if and only if its checkerboard surfaces are definite and of opposite signs.
\end{fact}

\begin{theorem}[Theorem 4.8 of \cite{bk20}]\label{T:bk48}
Suppose $(\Sigma,L)$ is nonstabilized.\footnote{Recall that this implies that $L\subset\Sigma\times I$ is a nonsplit link.} Then $L$ is alternating if and only if it has connected\footnote{\label{N:split}Spanning surfaces are assumed to be connected throughout \cite{bk20}.}
spanning surfaces of opposite signs. 
\end{theorem}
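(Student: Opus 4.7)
The plan is to use Proposition 3.8 of \cite{bk20} (stated above as T:bk1) as the diagrammatic core: for the forward direction I deduce the existence of opposite-sign definite spanning surfaces from the checkerboard construction applied to an alternating diagram, while for the reverse direction I build an alternating diagram from a given pair of opposite-sign definite spanning surfaces, following the strategy Greene used in the classical setting \cite{greene}.

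For the forward direction, start with an alternating diagram $D$ of $L$ on $\Sigma$. Since $(\Sigma,L)$ is nonstabilized, $D$ is cellularly embedded and hence fully alternating, so Corollary 3.6 of \cite{bk20} gives that $L$ is nonsplit, which in turn forces $D$ to be connected (this is the content of Corollary \ref{C:bk36++}). Since $\Sigma$ is orientable and $D$ alternates, the region-orientation argument from the footnote after Theorem \ref{T:ObviouslyPrimeEtc} provides a checkerboard coloring of $\Sigma\setminus\setminus D$, so Proposition 3.8 of \cite{bk20} yields checkerboard surfaces $B$ and $W$ which are definite and of opposite signs. A short combinatorial check, using that $D$ is connected and cellularly embedded, confirms that $B$ and $W$ are themselves connected.

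For the reverse direction, suppose $L$ has connected spanning surfaces $F_+$ (positive definite) and $F_-$ (negative definite). The goal is to show that $L$ carries a cellularly embedded, checkerboard colorable, alternating diagram on $\Sigma$ whose checkerboard surfaces are (isotopic to) $F_+$ and $F_-$. First I would put $F_+$ and $F_-$ in general position, so that $F_+\cap F_-$ is a disjoint union of arcs with endpoints on $L$ and of closed curves in the interiors. Definiteness implies essentiality of each $F_\pm$, since a compressing disk would exhibit a class in $H_1(F_\pm)$ with zero self-pairing under $\mathcal{G}_{F_\pm}$, contradicting positive- or negative-definiteness. Standard innermost-disk and innermost-circle arguments then let me isotope $F_+$ or $F_-$ to eliminate the closed-curve components of $F_+\cap F_-$ while preserving the $S^*$-equivalence class (hence the signature and definiteness) of each. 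The remaining arcs of $F_+\cap F_-$ project to $\Sigma$ as the crossings of a diagram $D$ for $L$, with over/under information read off from the vertical ordering of $F_\pm$ along each arc; $D$ is cellularly embedded because $F_+\cup F_-$ fills $\Sigma\times I$ up to a regular neighborhood of $L$, and it is checkerboard colorable because the two sides of $F_+\cup F_-$ provide the two colors.

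The hard part will be showing that $D$ is alternating, which is where the opposite-sign definiteness does the real work. Using the signature identity $\sigma_F(L)=\sigma(F)-\tfrac12 s(F)$ from \cite{bck21}, together with $\sigma(F_+)=\beta_1(F_+)$, $\sigma(F_-)=-\beta_1(F_-)$, and the fact that $F_\pm$ represent the two distinct $S^*$-equivalence classes of spanning surfaces of $L$, one pins down $s(F_+)-s(F_-)$ in terms of $\beta_1(F_+)+\beta_1(F_-)$. By Observation \ref{O:Slopes}, this same quantity equals the intersection number $i(\partial F_+,\partial F_-)_{\partial\nu L}$, which in turn is governed combinatorially by the arc count of $F_+\cap F_-$, i.e.\ the crossing number of $D$. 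Any non-alternating crossing would allow a local surgery that produces a homology class in $H_1(F_+)$ or $H_1(F_-)$ with wrong-sign framing under $\mathcal{G}_{F_\pm}$, contradicting definiteness. With $D$ confirmed alternating, cellularly embedded, and checkerboard colorable, Proposition 3.8 of \cite{bk20} delivers $L$ as alternating and closes the argument.
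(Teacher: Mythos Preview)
Your forward direction is fine. The reverse direction has a real gap: you write that after eliminating closed curves, ``the remaining arcs of $F_+\cap F_-$ project to $\Sigma$ as the crossings of a diagram $D$ for $L$.'' But $F_+$ and $F_-$ are arbitrary spanning surfaces in $\Sigma\times I$; there is no reason their intersection arcs project to points of $\Sigma$, nor that $F_+\cup F_-$ sits over $\Sigma$ as a pair of checkerboard surfaces. That is precisely what has to be \emph{constructed}, and your sketch assumes it.

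The actual argument (sketched in the paper's footnote, following Greene's classical proof and Boden--Karimi's extension) runs differently. After isotoping $F_\pm$ to intersect minimally, one first shows that every arc of $F_+\cap F_-$ is a \emph{standard arc}---one near which $F_+\cup F_-$ looks locally like two checkerboard surfaces meeting at a crossing. This is where definiteness does its work, via an argument of the same flavor as Lemma~\ref{L:--Only}: a nonstandard arc would let you cut and compare slopes to contradict the sign constraints. Only \emph{then} does one collapse $F_+\cup F_-$ along these standard arcs (and extend through $\nu L$) to obtain a closed embedded surface $S\subset\Sigma\times I$ on which $L$ has a diagram whose checkerboard surfaces are $F_\pm$; alternatingness of that diagram now follows from Proposition~3.8 of \cite{bk20}. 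Finally---and this is where the nonstabilized hypothesis enters, which your sketch never uses---each component of $S$ is either parallel to $\Sigma$ or bounds a ball, so nonstabilized forces $S$ to be isotopic to $\Sigma$, giving an alternating diagram on $\Sigma$ itself. Your proposal collapses these two nontrivial steps (standard arcs, then $S\cong\Sigma$) into an unjustified projection, and your alternatingness argument is circular because it presumes a diagram that does not yet exist.
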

 
The proof in \cite{bk20} of Theorem \ref{T:bk48} shows moreover that if $L$ has connected spanning surfaces of opposite signs, then there is a closed surface $S$ in $\Sigma\times I$ on which $L$ has a cellular alternating diagram whose checkerboard surfaces are isotopic to the given surfaces; further, if $(L,\Sigma)$ is nonstabilized, then $S$ is isotopic to $\Sigma$.  Formally:
 
\begin{cor}\label{C:bk2}
If $(\Sigma,L)$ is nonstabilized and $B$ and $W$ are connected spanning surfaces of opposite signs spanning $L$, then $L$ has a cellular alternating diagram on $\Sigma$ whose checkerboard surfaces are isotopic to $B$ and $W$.%
\end{cor}

\begin{convention}\label{Conv:+-}
The checkerboard surfaces $B$ and $W$ of any cellular alternating diagram are labeled such that $B$ is positive-definite and $W$ is negative-definite. Likewise for checkerboard surfaces $B'$ and $W'$ (resp. $B_i$ and $W_i$) from such a diagram $D'$ (resp. $D_i$). 
%
\end{convention}

\begin{lemma}[c.f. \cite{bk20} Lemma 3.7]\label{L:bk37}
The checkerboard surfaces $B$ and $W$ of any cellular alternating diagram of a link $(\Sigma,L)$ satisfy%
\footnote{For an arbitrary diagram on $\Sigma$, $|\sigma_W(L)-\sigma_B(L)|\leq 2g(\Sigma).$}
\[\sigma_B(L)-\sigma_W(L)=2g(\Sigma).\]
\end{lemma}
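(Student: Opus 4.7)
The plan is to compute $\sigma_W(L)-\sigma_B(L)$ directly by substituting the defining formula $\sigma_F(L)=\sigma(F)-\tfrac12 s(F)$, applying the definiteness hypotheses, and then evaluating the remaining quantities from the combinatorial structure of $D\subset\Sigma$.

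First, I would apply definiteness: positive-definiteness of $B$ gives $\sigma(B)=\beta_1(B)$, and negative-definiteness of $W$ gives $\sigma(W)=-\beta_1(W)$, so
\[\sigma_W(L)-\sigma_B(L)=-\bigl(\beta_1(B)+\beta_1(W)\bigr)+\tfrac12\bigl(s(B)-s(W)\bigr).\]

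Next I would evaluate $\beta_1(B)+\beta_1(W)$ combinatorially. Since $D$ is fully alternating, it is cellularly embedded; with $c$ crossings, Euler's formula on $\Sigma$ cuts it into $b+w=c+2-2g(\Sigma)$ disks, where $b$ (resp.\ $w$) is the number of black (resp.\ white) regions. Each checkerboard surface is built from its colored disks joined by the $c$ half-twisted bands at crossings, so $\chi(B)=b-c$ and $\chi(W)=w-c$. By Corollary~\ref{C:CountCC}, $\beta_0(B)=\beta_0(W)$, and in the generic (weakly prime or connected) case $\beta_0(B)=\beta_0(W)=1$, giving
\[\beta_1(B)+\beta_1(W)=2-\chi(B)-\chi(W)=c+2g(\Sigma).\]

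The main content of the proof is the evaluation of $s(B)-s(W)$. By Observation~\ref{O:Slopes}, $s(B)-s(W)=i(\partial B,\partial W)_{\partial\nu L}$, the signed intersection of the cooriented boundary pushoffs on $\partial\nu L$. All intersections are localized at the crossings of $D$, where $B\cap W$ consists of vertical arcs, and the alternating condition forces a uniform sign of contribution at each crossing. Combined with the $\Sigma$-topological contribution coming from the asymmetry
\[\lk_\Sigma(\alpha,\beta)-\lk_\Sigma(\beta,\alpha)=p_\Sigma(\alpha)\cdot_\Sigma p_\Sigma(\beta)\]
of the generalized linking pairing used to define $s$, a careful accounting yields the precise value of $s(B)-s(W)$ required to produce
\[\sigma_W(L)-\sigma_B(L)=2g(\Sigma).\]

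The main obstacle is the evaluation of $s(B)-s(W)$: whereas in the classical case ($g(\Sigma)=0$) the crossings alone yield $s(B)-s(W)=2c$, in the thickened-surface setting one must precisely track an additional $g(\Sigma)$-dependent term arising from the non-symmetry of $\lk_\Sigma$ relative to $\Sigma_+$. This requires a handle-by-handle analysis of the pushoffs and their intersections with each generator of $H_1(\Sigma)$. The disconnected case ($\beta_0(B)=\beta_0(W)>1$) is then handled by a parallel bookkeeping adjustment, where the extra components of $B$ and $W$ match the additional intersection contributions on the component-by-component level, leaving the final identity unchanged.
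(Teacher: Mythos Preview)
The paper does not prove this lemma; it is quoted without proof from \cite{bk20} (as Lemma 3.7 there). So there is no proof in the present paper to compare against.

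As for your proposal itself: the reduction you write down is immediate from the definitions, and your Euler-characteristic computation $\beta_1(B)+\beta_1(W)=c+2g(\Sigma)$ is correct (for connected $D$). But this reduction shows that the entire content of the lemma sits in the evaluation of $s(B)-s(W)$, and that is exactly the step you do not carry out. Plugging your formula back in, the lemma is equivalent to
\[
s(B)-s(W)=2c+8g(\Sigma),
\]
and you have not shown this. Saying that ``a careful accounting yields the precise value'' and gesturing at the asymmetry $\lk_\Sigma(\alpha,\beta)-\lk_\Sigma(\beta,\alpha)=p_\Sigma(\alpha)\cdot_\Sigma p_\Sigma(\beta)$ is not a proof: you have to explain concretely how that asymmetry produces an $8g(\Sigma)$ correction to the classical count $2c$. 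Note in particular that a naive appeal to Observation~\ref{O:Slopes} and a local count of $\partial B\cap\partial W$ on $\partial\nu L$ (two points per crossing) would give only $\pm 2c$, so the extra $8g(\Sigma)$ cannot come from that local picture alone; identifying its source is the genuine work, and your ``handle-by-handle analysis'' is asserted rather than performed.

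There is also a minor logical issue: you invoke Corollary~\ref{C:CountCC} to control $\beta_0$, but that corollary sits downstream of the Boden--Karimi characterization results, which themselves lie at the same depth as the lemma you are trying to prove. For a self-contained argument you should instead observe directly that a fully alternating diagram on connected $\Sigma$ is connected, hence both checkerboard surfaces are connected.
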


Moreover, much of Boden--Karimi's proof of Theorem \ref{T:bk48} goes through even if the spanning surfaces of opposite signs for $L$ are disconnected or if $(\Sigma,L)$ is stabilized, or both. 
 In particular, if $L$ has spanning surfaces (not necessarily connected) of opposite signs, then there is a closed surface $S$ (not necessarily connected) in $ \Sigma\times I$ on which $L$ has a cellular alternating diagram $D$ whose checkerboard surfaces are isotopic to the given surfaces; further, each component of $S$ either is parallel to $\Sigma$ or is a 2-sphere. In particular:


\begin{fact}\label{F:SplitDef}
If $F_\pm$ are definite surfaces of opposite signs spanning a link $L\subset \Sigma\times I$, then for some (possibly empty) disjoint union of 2-spheres $\Sigma'\subset (\Sigma\times I)\setminus \Sigma$, $L$ has a cellular alternating diagram $D\subset \Sigma\cup \Sigma'$ whose checkerboard surfaces are isotopic to $F_\pm$. Thus:
\begin{enumerate}[label=(\Alph*)]
\item $F_+$ and $F_-$ have the same number of connected components, and this equals the number of split components of $L$.
\item $L$ has at most one non-local component.
\end{enumerate}
\end{fact}

\subsubsection{Intersections between definite surfaces}\label{S:sub}

Let $F$ and $F'$ be spanning surfaces for $(\Sigma,L)$ with $F\pitchfork F'$.
Orient $L$ arbitrarily, and orient $\partial F$ and $\partial F'$ so that each is homologous in $\nu L$ to $L$.  
Given an arc $\alpha$ of $F\cap F'$, take $\nu \partial\alpha$ in $\partial \nu L$. 
%
Following Howie \cite{howie}, we call $\alpha$ {\bf standard} if $i(\partial F,\partial F')_{\nu\partial\alpha}=\pm2$ and {\it non-standard} if $i(\partial F,\partial F')_{\nu\partial\alpha}=0$.
\begin{equation}\label{E:SlopeArcs}
s(F)-s(F')=i(\partial F,\partial F')_{\partial\nu L}=\sum_{\text{arcs }\alpha\text{ of }F\cap F'}i(\partial F,\partial F')_{\nu\partial\alpha}\\
\end{equation}


\begin{figure}
\begin{center}
\includegraphics[width=\textwidth]{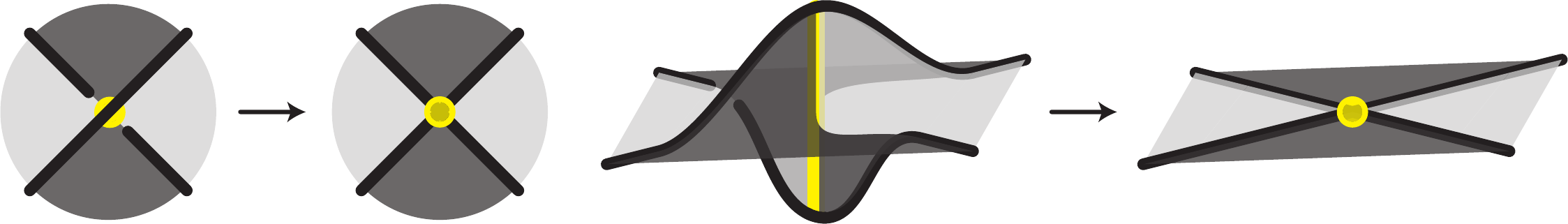}
\caption{Collapsing $S\cup T$ along a standard arc}
\label{Fi:ArcCollapse}
\end{center}
\end{figure}

\begin{procedure}
\label{Proc:ArcCollapse}
Let $(\Sigma,L)$ be non-stabilized with connected spanning surfaces $S,T$ such that $S\cap T$ consists entirely of standard arcs and $|S\cap T|=\beta_1(S)+\beta_1(T)+2g(\Sigma)$. Then extending $S,T$ through $\nu L$ so that $\partial S=L=\partial T$ and collapsing $S\cup T$ along each arc of $\text{int}(S)\cap \text{int}(T)$ gives a closed surface $Q$ isotopic to $\Sigma$\footnote{Connectedness and $|S\cap T|=\beta_1(S)+\beta_1(T)$ imply that $g(Q)=g(\Sigma)$. This and the assumption that $(\Sigma,L)$ is non-stabilized imply that $Q$ is isotopic to $\Sigma$.} on which $L$ collapses to a connected 4-valent graph; recovering crossing information gives a connected link diagram $D\subset Q$ for which $S$ and $T$ {\it are} checkerboard surfaces. The initial configuration of $S$ and $T$, up to isotopy of $S\cup T$ in $(\Sigma\times I)\setminus\inter L$, uniquely determines $D$ up to isotopy. See Figure \ref{Fi:ArcCollapse}.\ari 
\end{procedure}


\begin{prop}\label{P:GLS2}
If $(\Sigma,L)$ is local and has positive- and negative-definite connected spanning surfaces $F_+$ and $F_-$, then 
\[s(F_+)-s(F_-)=2\left(\beta_1(F_+)+\beta_1(F_-)\right).\]
\end{prop}
 
 \begin{proof}
 Because $L$ is local, the surfaces $F_+$ and $F_-$ are $S^*$-equivalent, so $\sigma_{F_+}(L)=\sigma_{F_-}(L)$, and the result follows from (\ref{E:Sig}).
 \end{proof}

\begin{prop}[c.f. Propositions 2.12 and 2.22 of \cite{flyping}]\label{P:DetermineD}
If $(\Sigma,L)$ is non-stabilized and has positive- and negative-definite connected spanning surfaces $F_+$ and $F_-$, then 
\[s(F_+)-s(F_-)=2\beta_1(F_+)+2\beta_1(F_-)+4g(\Sigma).\]
Further, if $F_+\cap F_-$ is comprised of arcs $\alpha$ with $i(\partial F_+,\partial F_-)_{\nu\partial\alpha}=+2$:
\begin{enumerate}[label=(\Alph*)]
\item  $|F_+\cap F_-|=\beta_1(F_+)+\beta_1(F_-)+2g(\Sigma)$, 
\item $F_\pm$ yield an alternating diagram $D$ via Procedure \ref{Proc:ArcCollapse}, and
\item if $F_+$ and $F_-$ are $\partial$-incompressible, then $D$ has no removable nugatory crossings.
\end{enumerate}
\end{prop}

\begin{proof}
Isotope $F_\pm$ so that each component $\alpha$ of $F_+\cap F_-$ is an arc with $i(\partial F_+,\partial F_-)_{\nu\partial\alpha}=+2$. Now
\[|F_+\cap F_-|=\frac{1}{2}|\partial F_+\cap\partial F_-|=\frac{1}{2}\left(s(F_+)-s(F_-)\right),\]
which equals $\beta_1(F_+)+\beta_1(F_-)+2g(\Sigma)$ by (\ref{E:Sig}) and Lemma \ref{L:bk37}
. Therefore,  the pair $F_\pm$ determines a connected diagram $D$ of $L$ via Procedure \ref{Proc:ArcCollapse}. The checkerboard surfaces of $D$ are $F_\pm$, so $D$ is alternating by Fact \ref{F:PGreene}. Part (C) follows easily.
\end{proof}

\begin{fact}[c.f. Fact 2.23 of \cite{flyping}, Lemma 3.4 of \cite{greene}]\label{F:GreeneCircle}
If $F_+\pitchfork F_-$ are definite surfaces of opposite signs spanning a link $L\subset \Sigma\times I$, then any circle $\gamma\subset F_+\cap F_-$  bounds disks in both $F_+$ and $F_-$.
\end{fact}


\begin{procedure}
\label{Proc:Kill1}
Suppose $F_+\pitchfork F_-$ are definite surfaces of opposite signs spanning a link $L\subset \Sigma\times I$.
Fixing $F_-$, isotope $F_+$ via the following hierarchy of moves:\footnote{That is, perform (1) whenever possible,  perform (2) whenever possible unless (1) is possible, and perform (3) whenever possible unless (1) or (2) is possible.
}
\begin{figure}
\begin{center}
\includegraphics[width=.8\textwidth]{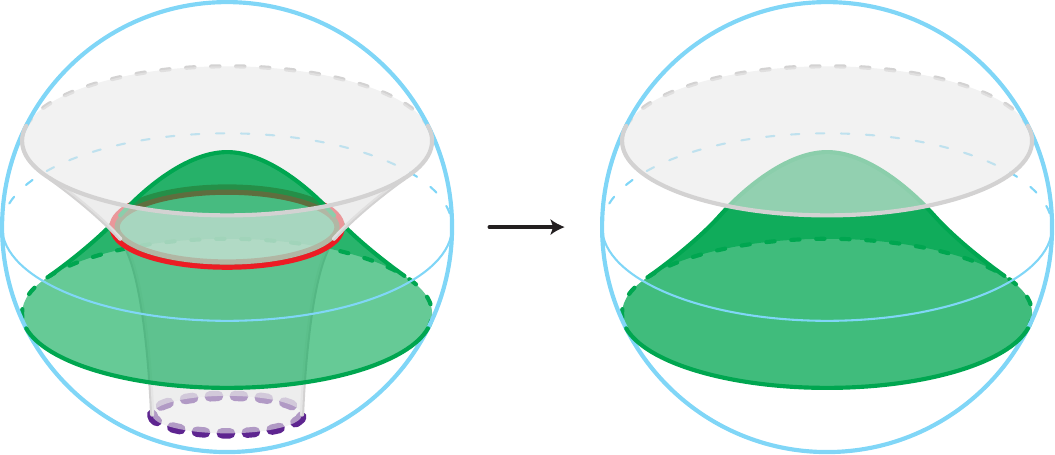}
\caption{Removing a $\red{\text{circle }\gamma}$ of intersection between positive- and negative-definite surfaces $\FG{F_+}$ and $\Gray{F_-}$.  The dashed purple circle bounds a disk in $F_+$.}
\label{Fi:RemoveCircle}
\end{center}
\end{figure}
\begin{enumerate}[label=(\arabic*)]
\item If $F_+\cap F_-$ contains circles, then (using Fact \ref{F:GreeneCircle}) choose an innermost one $\gamma$ in $F_+$; $\gamma$ bounds disks $X_\pm\subset F_\pm$.  Using the irreducibility of $(\Sigma\times I)\setminus L$, isotope $X_+$ past $X_-$ as shown in Figure \ref{Fi:RemoveCircle}. Meanwhile, fix $F_+$ away from $X_+$.
\item If any arc $\alpha$ of $F_+\cap F_-$ is parallel in $F_-{\cut} F_+$ to $\partial F_-$ \emph{and} in $F_+{\cut} F_-$ to $\partial F_+$, then remove $\alpha$ as shown in Figure \ref{Fi:FWMove}, top.
\begin{figure}
\begin{center}
\includegraphics[width=\textwidth]{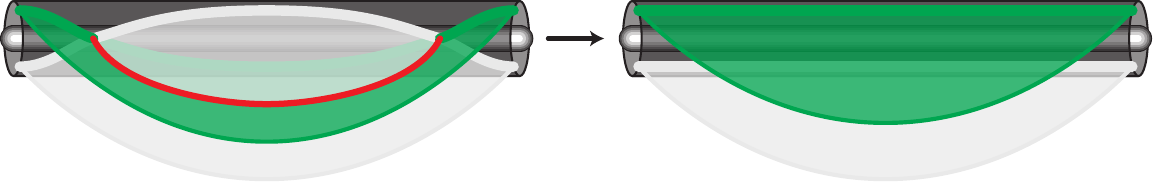}\\
\includegraphics[width=\textwidth]{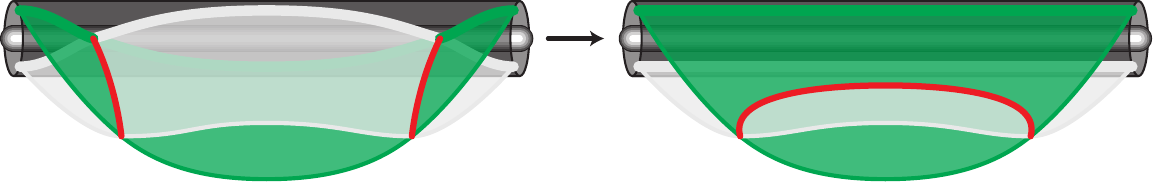}
\caption{Removing adjacent points of $\partial F_+\cap\partial F_-$ of opposite sign}
\label{Fi:FWMove}
\end{center}
\end{figure}
\item 
If arcs $\alpha_+\subset \partial F_+{\cut}\partial F_-$ and $\alpha_-\subset \partial F_-{\cut}\partial F_+$ are parallel in $\partial\nu L$, then push $\alpha_+$ past $\alpha_-$ as in Figure \ref{Fi:FWMove}, bottom.\ari
\end{enumerate}
\end{procedure}

We also recall:

\begin{fact}
\label{Fact:arccutdef}
If $\alpha$ is a system of disjoint properly embedded arcs in a definite surface $F$, then  $F\setminus\overset{\scriptscriptstyle{\circ}}{\nu}{\alpha}$ is definite.\ari
\end{fact}
\begin{figure}
\begin{center}
\labellist
\tiny\hair 4pt
\pinlabel {$\white{\boldsymbol{\alpha}}$} [c] at 113 127
\pinlabel {$\white{\boldsymbol{\alpha'}}$} [c] at 110 50
\pinlabel {$\white{\boldsymbol{\alpha''}}$} [c] at 412 65
\endlabellist
\includegraphics[width=\textwidth]{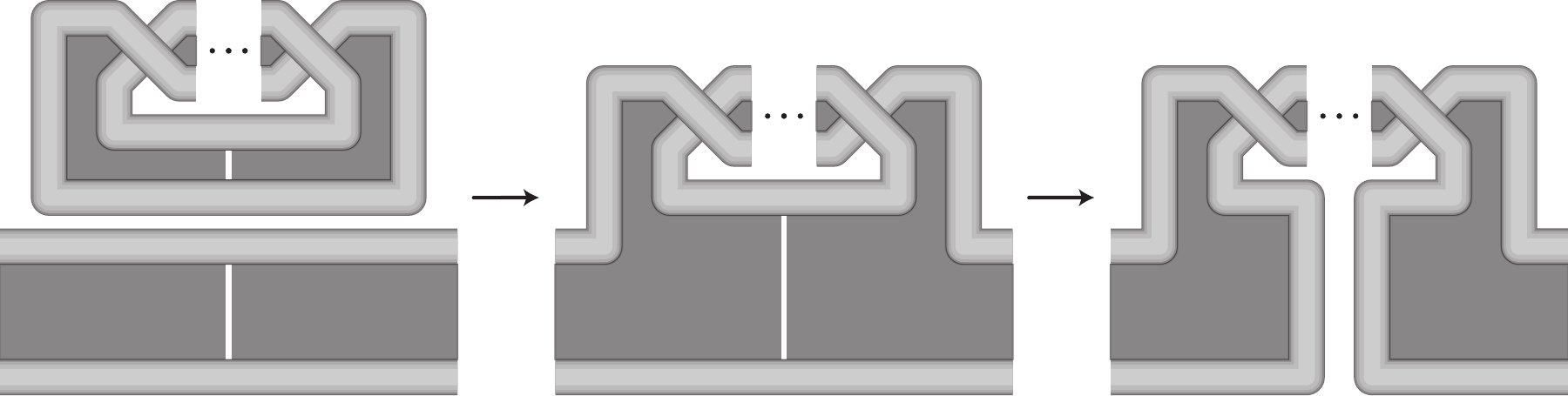}
\caption{Adding positive twists to a spanning surface}
\label{Fi:AddTwists}
\end{center}
\end{figure}
\begin{fact}
\label{F:AddTwist}
If $F'$ is obtained by adding positive twists to a positive-definite surface $F$ as in Figure \ref{Fi:AddTwists}, then $F'$ is positive-definite.\ari%
\footnote{Likewise for adding negative twists to a negative-definite surface.}%
\end{fact}

\begin{fact}
\label{F:Arc0}
If $F_\pm$ are definite surfaces of opposite signs spanning $(\Sigma,L)$ and $\alpha$ is a non-standard arc of $F_+\cap F_-$, then denoting $F_+'=F_+\setminus\inter\alpha$, $L'=\partial F_+'$, and $F_-'=F_-\setminus\inter\alpha$, the following are equivalent:
\begin{enumerate}[label=(\Roman*)]
\item $\alpha$ is separating on $F_+$;
\item $\alpha$ is separating on $F_-$;
\item $L'$ has one more split component than $L$.\ari
\end{enumerate}
\end{fact}


The next two facts differ notably from their classical analogs:

\begin{fact}[c.f. Proposition 6.6 of \cite{flyping}]\label{F:BEss}
Let $F$ be a positive-definite surface spanning a locally {prime} alternating link $L$, and let $K$ be the kernel of the map $H_1(F)\to H_1(\Sigma\times I)$ induced by inclusion $F\hookrightarrow \Sigma\times I$.  Then $F$ is end-essential if and only if every nonzero $a\in K$ satisfies $\lla a,a\rra_F\geq 2$.\footnote{Definiteness implies that $F$ is end-incompressible.}
\end{fact}

\begin{proof}
Take an end-essential negative-definite spanning surface $W$ for $L$ with $W\pitchfork F$, and let $D$ be an alternating diagram of $L$ associated to $F,W$ (via Procedure \ref{Proc:Kill1} and then \ref{Proc:ArcCollapse}).  If $D$ is locally {prime}, then both conditions are satisfied, the first by Theorem \ref{T:EndEss} and the second by an argument analogous to the proof of Lemma 4 of \cite{cromur}. 
Conversely, if $D$ admits a removable nugatory crossing $c$, then neither condition holds, because $W$ is end-essential.
\end{proof}

\begin{prop}[c.f. Proposition 6.7 of \cite{flyping}]\label{P:CutEss}
Let $F$ be a positive-definite surface spanning a locally {prime} alternating link $L$, and let $\alpha\subset F$ be a properly embedded arc such that $F'=F\setminus\inter\alpha$ spans a locally {prime} alternating link $L'$.  If $F$ is end-essential, then $F'$ is also end-essential.
\end{prop}

\begin{proof}
Letting  $K$ and $K'$ denote the kernels of the maps $H_1(F)\to H_1(\Sigma\times I)$ and $H_1(F')\to H_1(\Sigma\times I)$ induced by inclusion, Fact \ref{F:BEss} tells us that every nonzero $c\in K$ satisfies $\lla c,c\rra_F\geq 2$, and Fact \ref{Fact:arccutdef} implies that $F'$ is positive-definite. Therefore every nonzero $c\in K'$ satisfies $\lla c,c\rra_F\geq 2$, and so Fact \ref{F:BEss} implies that $F'$ is end-essential.
\end{proof}

\begin{prop}
\label{P:Kill1}
As a result of Procedure \ref{Proc:Kill1}, $F_+\cap F_-$ consists only of standard positive arcs.\gem%
\footnote{Note that Procedure \ref{Proc:Kill1} always terminates because each move decreases $|F_+\cap F_-|+|\partial F_+\cap\partial F_-|$.}
\end{prop}

\begin{prop}
\label{P:BdryParallel0}
If $F_\pm$ are definite surfaces of opposite signs spanning a link $L\subset\Sigma\times I$ and $\alpha$ is an arc of $F_+\cap F_-$ that is $\partial$-parallel in both $F_+$ and $F_-$, then $\alpha$ is non-standard.\gem
\end{prop}

\begin{lemma}[c.f. Lemma 2.30 of \cite{flyping}]\label{L:--Only}
Suppose $F_\pm$  are positive- and negative-definite surfaces spanning a non-stabilized link $L\subset \Sigma\times I$, and $\alpha$ is an arc of $F_+\pitchfork F_-$. Then:
\begin{enumerate}[label=(\Alph*)]
\item $i(\partial F_+,\partial F_-)_{\nu\partial \alpha}\neq-2$. 
\item If $\alpha$ is nonseparating on $F_-$, then $i(\partial F_+,\partial F_-)_{\nu\partial \alpha}=2$. 
\item In particular, if $L$ is locally {prime}, both $F_\pm$ are essential, and $\alpha$ is not $\partial$-parallel in both $F_\pm$, then $i(\partial F_+,\partial F_-)_{\nu\partial \alpha}=2$.
\end{enumerate}
\end{lemma}


\begin{proof}
The argument is largely the same as in \cite{flyping}. For (A) and (B), we just describe the differences: if $(\Sigma,L')$ is nonstabilized, then replacing $\beta_1(F_+)+\beta_1(F_-)$ with $\beta_1(F_+)+\beta_1(F_-)+2g(\Sigma)$ in (6.1) and (6.2) of \cite{flyping} contradicts Proposition \ref{P:DetermineD} (A); if $(\Sigma,L')$ is stabilized, then Fact \ref{F:SplitDef} (A) (and, for (B), the assumption that $\alpha$ is non-separating on $F_-$) implies that $L'$ is local, so Proposition \ref{P:GLS2} gives:
\begin{equation}
\begin{split}\label{E:No++B}
-2&=
\left(s(F_+)-s(F_-)\right)-\left(s(F_+')-s(F'_-)\right)\\
-2&=2 (\beta_1(F_+)+\beta_1(F_-)+2g(\Sigma))-2 (\beta_1(F_+')+\beta_1(F'_-))\\
-1&=g(\Sigma).
\end{split}
\end{equation}
We prove (C) 
by contradiction. Apply Procedure \ref{Proc:Kill1} $F_+=F_0\to F_1\to\cdots\to F_t$ until it terminates, and consider the last move (3) $F_s\to F_{s+1}$ in the sequence, which involves  two arcs $\alpha_1,\alpha_2$ of $F_s\cap F_-$ and one arc $\alpha$ of $F_{s+1}\cap F_-$; perturb $\alpha_1$ in $F_-$ so that it is disjoint from $F_s$. Parts (A) and (B) imply without loss of generality that $\alpha_1$ is non-standard, so $F_-\setminus\nu\alpha_1$ and $F_s\setminus\nu\alpha_1$ are definite surfaces of opposite sign spanning the same link $L'$.  Observe that, for {all} $i=s+1,\hdots, t$ (c.f. (6.3) of \cite{flyping}), and each arc $\alpha'$ of $F_-\cut F_i$ that separates $F_-$, either $\alpha'$ is $\partial$-parallel in $F_-$ or $\partial(F_-\setminus\nu\alpha')$ is split with no local components.  The latter ``possibility'' uses the assumption that $L$ is locally prime; it also contradicts Fact \ref{F:SplitDef} (B).  Therefore,  $\alpha_1$ is $\partial$-parallel in $F_-$, which contradicts the hierarchy of the moves in Procedure \ref{Proc:Kill1}.
\end{proof}

Using Lemma \ref{L:--Only}, the same reasoning as in \cite{flyping} leads to:

\begin{theorem}
\label{T:DBW}
Suppose $(\Sigma,D)$ and $(\Sigma,D')$ are locally prime, cellular alternating diagrams of $(\Sigma,L)$ with checkerboard surfaces $B,W$ and $B',W'$. Then $D$ and $D'$ are 
equivalent if and only if $B$ and $B'$ are isotopic in $(\Sigma\times I)\setminus\inter L$, as are $W$ and $W'$.\gem
\end{theorem}

\begin{cor}
\label{C:DBW}
There is a bijective correspondence between equivalence classes of locally prime, cellular alternating link diagrams on $\Sigma$ and pairs of isotopy classes of essential definite surfaces of opposite signs spanning the same locally prime, nonstabilized link in $\Sigma\times I$.\gem\footnote{Example 2.37 of \cite{flyping} shows that Theorem \ref{T:DBW} and Corollary \ref{C:DBW} become false if one removes ``locally prime" or ``cellular alternating."}
\end{cor}

\subsection{
Plumbing}\label{S:plumb}

A {\it plumbing cap} for a surface $F$ spanning $(\Sigma,L)$  is an embedded disk $V\subset (\Sigma\times I)\setminus\inter L$ with $V\cap(F\cup\partial\nu L)=\partial V$ where:
\begin{itemize}
\item 
$\partial V$ bounds a disk $\widehat{U}\subset F\cup \nu L$, 
\item  $\widehat{U}\cap F$ is a disk $U$ called the {\it shadow} of $V$, and
\item denoting the components of $(\Sigma\times I)\cut(\widehat{U}\cup V)$ by  $Y_1,Y_2$, neither subsurface $F_i=F\cap Y_i$ is a disk.
\end{itemize}
If the first two properties hold but the third fails, we call $V$ a {\it fake plumbing cap} for $F$.\footnote{The decomposition $F=F_1\cup F_2$ is a {\it de-plumbing} of $F$ along $U$ and $V$, denoted $F=F_1*F_2$. The reverse operation, in which one obtains $F$ by gluing $F_1$ and $F_2$ along $U$, is called {\it generalized plumbing} or {\it Murasugi sum}.}
\begin{figure}
\begin{center}
\includegraphics[width=.8\textwidth]{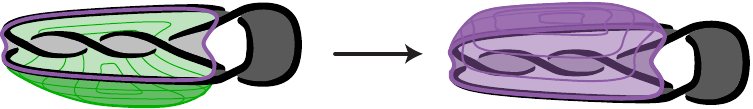}
\caption{Re-plumbing a spanning surface replaces a plumbing \FG{shadow } with its \violet{cap}\color{black}.}\label{Fi:replumb}
\end{center}
\end{figure}
If $V$ is a plumbing cap for $F$ with shadow $U$, then the operation $F\to (F\setminus U)\cup V$ is called {\bf re-plumbing}. See Figure \ref{Fi:replumb}.  The same operation along a fake plumbing cap, a ``fake re-plumbing,'' is an isotopy move.
Two spanning surfaces are {\it plumb-related} if they are related by re-plumbing and isotopy moves.
 

\subsubsection{The 4-dimensional perspective}\label{S:4Ball}
\begin{prop}[c.f. Proposition 2.36 of \cite{flyping}]\label{P:B4}
Given surfaces $F_1,F_2$ spanning $(\Sigma,L)$, let $F'_i$ be properly embedded surfaces in $\Sigma\times I\times I_+$ obtained by perturbing $\text{int}(F_i)$, while fixing $\partial F_1=L=\partial F_2$. 
If $F_1\setminus\inter L$ and $F_2\setminus\inter L$ are plumb-related, then:
\begin{enumerate}[label=(\Alph*)]
\item $F'_1$ and $F'_2$ are related by an ambient isotopy of $\Sigma\times I \times I_+$ which fixes $\Sigma\times I\supset L$;
\item there is an isomorphism $\phi:H_1(F_1)\to H_1(F_2)$ satisfying $\lla \alpha,\beta \rra_{F_1}=\lla \phi(\alpha),\phi(\beta)\rra_{F_2}$ for all $\alpha,\beta\in H_1(F_1)$;
\item if $F_1$ is definite, then $F_2$ is definite of the same sign;
\item in particular, if $F_1$ is a checkerboard surface from an alternating diagram of $L$ on $\Sigma$, then so is $F_2$;
\item $F_1$ and $F_2$ are $S^*$ equivalent, and thus $\sigma_{F_1}(L)=\sigma_{F_2}(L)$.
\end{enumerate}
\end{prop}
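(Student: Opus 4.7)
The plan is to prove (i) directly from the geometry of re-plumbing and then deduce (ii)--(v) from it, invoking the 4-dimensional isomorphism $(H_1(F),\mathcal{G}_F)\cong (H_2(M_{\wh{F}}),\cdot)$ established earlier in \S\ref{S:plumb} together with the results of \S\ref{S:Def}.

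For (i), the key observation is that the plumbing cap $V$ and its shadow $\wh{U}\subset F_1\cup\nu L$ together bound a 3-ball $Y\subset \Sigma\times I$. I would choose perturbations of $F_1$ and $F_2$ that agree away from a neighborhood of $Y$, so that $F_1'$ and $F_2'$ differ only inside a small thickening $Y\times[0,\ep]\subset \Sigma\times I\times I_+$, which is itself a 4-ball. Inside this 4-ball, the perturbed images of $U$ and $V$ are properly embedded disks with the same boundary on $\partial(Y\times[0,\ep])$, hence ambient isotopic rel boundary; extending by the identity outside gives the desired global isotopy. The main delicacy is arranging the two perturbations compatibly near $\partial Y$ so that the local isotopy glues smoothly to the identity, particularly where $\wh{U}$ meets $\nu L$.

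For (ii), any $a\in H_1(F_1)$ can be represented by a cycle $\gamma$ in $F_1\setminus\text{int}(U)=F_2\setminus\text{int}(V)$ (since $U$ and $V$ are disks), which also represents a class in $H_1(F_2)$; this defines $\phi$. Because $F_1$ and $F_2$ coincide on a neighborhood of $\gamma$, the transfer map $\tau$ and the relevant pushoffs agree in both surfaces, so the linking-number formula for $\mathcal{G}$ yields the same value. Alternatively, one may apply (i): the 4-dimensional isotopy lifts to a homeomorphism $M_{\wh{F_1}}\to M_{\wh{F_2}}$ preserving the intersection form on $H_2$, and the isomorphism $(H_1(F_i),\mathcal{G}_{F_i})\cong (H_2(M_{\wh{F_i}}),\cdot)$ transports this to $\phi$. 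Part (iii) is then immediate, as positive-definiteness is invariant under isomorphisms of pairings.

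For (iv), suppose without loss of generality that $F_1$ is the positive-definite checkerboard of a fully alternating diagram $D\subset \Sigma$ with negative-definite checkerboard $W$. By Corollary 3.6 of \cite{bk20}, $(\Sigma,L)$ is nonstabilized. By (iii), $F_2$ is positive-definite, and Corollary \ref{C:CountCC} guarantees $F_2$ is connected (matching $F_1$). Applying Corollary \ref{C:bk2} to the pair $(F_2,W)$ produces a fully alternating diagram on $\Sigma$ whose positive checkerboard is isotopic to $F_2$. For (v), $\sigma(F_i)$ is the signature of $\mathcal{G}_{F_i}$ and is preserved by (ii), while $s(F_i)=-e(F_i')$ is preserved by (i) as a 4-dimensional self-intersection number; these yield $\sigma_{F_1}(L)=\sigma_{F_2}(L)$. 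Finally, $S^*$-equivalence follows because the 3-chain $Y$ witnesses $[F_1]=[F_2]$ in $H_2(\Sigma\times I,L;\mathbb{Z}/2)$, and for nonsplit links this relative class determines the $S^*$-equivalence class, as noted in the footnote in \S\ref{S:GL}.
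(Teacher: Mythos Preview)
Your argument for (i) is correct and matches the paper's: the 3-ball $Y$ bounded by $U\cup V$, pushed into $\Sigma\times I\times I_+$, is the trace of the desired isotopy. The problem is in (ii), where both of your proposed routes have genuine gaps.

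For your primary argument, the claim that every $a\in H_1(F_1)$ can be represented by a cycle lying in $F_1\setminus\text{int}(U)$ is false. The shadow $U$ is a disk whose boundary meets $\partial F_1=L$ in arcs (recall $\partial V\subset F\cup\partial\nu L$, so $\wh{U}$, hence $U$, typically touches $\partial F$). When $U$ meets $\partial F_1$, the inclusion $F_1\setminus\text{int}(U)\hookrightarrow F_1$ need not be surjective on $H_1$: for a concrete model, let $F_1$ be an annulus and $U$ a rectangle with two opposite sides on the two boundary circles; then $F_1\setminus\text{int}(U)$ is a disk while $H_1(F_1)\cong\Z$. The paper circumvents this by constructing $\phi$ differently: given a multicurve $\alpha\subset F_1$ representing $a$, replace each arc of $\alpha\cap U$ by an arc in $V$ with the same endpoints (possible because $U$ and $V$ are disks sharing the arcs $\partial U\cap\text{int}(F_1)=\partial V\cap F$), and set $\phi(a)=[\alpha']$ for the resulting multicurve $\alpha'\subset F_2$.

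Your alternative via the double-branched cover does not close the gap either. Theorem~\ref{T:gordlith} (which in fact appears \emph{after} this proposition) only gives an isomorphism between the subgroup $K=i_*^{-1}(\ker\psi)\leq H_1(F)$ and $H_2(M_{\wh{F}})$, not all of $H_1(F)$; when $g(\Sigma)>0$ this subgroup is proper, so the intersection-form argument cannot recover the full Gordon--Litherland pairing on $H_1(F_i)$. Your treatments of (iv) via Corollary~\ref{C:bk2} and of (v) are more detailed than the paper's (which records (iii)--(iv) as immediate and deduces (v) from $[F_1]=[F_2]$ in $H_2(\Sigma\times I,L;\Z/2)$), but they are sound once (ii) is repaired.
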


\begin{proof}
Part (A) is the same as in \cite{flyping}. For (B), construct the desired isomorphism $\phi:H_1(F_1)\to H_1(F_2)$ as follows.  Given $a\in H_1(F_1)$, take a multicurve $\alpha\subset F_i$ representing $a$, replace each arc of $\alpha\cap U$ with an arc in $V$ (with the same initial and terminal points), and denote the resulting multicurve by $\alpha'$; set $\phi(a)=[\alpha']$.  
This immediately gives (C) and (D), and (E) now follows from the observation that $[F_1]+[F_2]=0\in H_2(\Sigma\times I,L;\Z/2)$, since the union of any plumbing cap and its shadow is nullhomologous.
\end{proof} 

Next, we extend Theorem 3 of \cite{gordlith} to the context of thickened surfaces. Let $F$ be a spanning surface of a link $L\subset\Sigma \times I$. Isotope $F$ so that $F\subset (\Sigma\setminus\inter x)\times I$ for some point $x\in \Sigma$.\footnote{To see that this is always possible, consider isotoping $F$ into disk-band form.} 
Let $F'$ be a properly embedded surface in $(\Sigma\setminus\inter x)\times I\times I_+$ obtained by perturbing the interior of $F$ while fixing $\partial F$. One can construct the double-branched cover $M_{\wh{F}}$ of $(\Sigma\setminus\inter x)\times I\times I_+$ along $F'$ by cutting $\Sigma\times I\times I_+$ along the trace of this isotopy, taking two copies, and gluing. Yet, these two copies are homeomorphic to $\Sigma\times I\times I_+$, and the gluing region corresponds to a regular neighborhood $N$ of $F$ in $\Sigma\times I$.
Therefore, one may instead construct $M_{\wh{F}}$ as follows. Let $\iota:N\to N$ be involution given by reflection in the fiber, take two copies $\Sigma^4_1$ and $\Sigma^4_2$ of $(\Sigma\setminus\inter x)\times I\times I_+$, and define
\[M_{\wh{F}}=\left(\Sigma^4_1\cup\Sigma^4_2\right)/\left(y\in N\subset\partial\Sigma^4_1\sim\iota(y)\in N\subset \partial\Sigma^4_2\right).\]
Consider the Mayer-Vietoris sequence for $M_{\wh{F}}$:
\[0=H_2(\Sigma^4_1)\oplus H_2(\Sigma^4_2)\!\to\! H_2(M_{\wh{F}})\overset{\varphi}{\!\to\!} H_1(N)\overset{\psi}{\!\to\!} H_1(\Sigma^4_1)\oplus H_1(\Sigma^4_2)\!\to\! \cdots\]
If $g(\Sigma)=0$, as in \cite{gordlith}, then both $\Sigma^4_i$ are 4-balls, so $\varphi$ is an isomorphism; Gordon--Litherland then use the inverse map to compare the intersection form $\cdot$ on $M_{\wh{F}}$ with their pairing $\Gs_F$ on $F$.  After restricting appropriately, the same ideas work here:

\begin{theorem}[c.f. Theorem 3 of \cite{gordlith}]\label{T:gordlith}
With the setup above, let $i_*:H_1(F)\to H_1(N)$ be the isomorphism induced by inclusion, and denote $K=i_*^{-1}\left(\ker(\psi)\right)$.  Then there is an isomorphism $S:
\left(K,\Gs_F\right)\to \left(H_2(M_{\wh{F}}),\cdot\right)$.
\end{theorem}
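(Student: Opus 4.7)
The plan is to leverage the displayed Mayer--Vietoris sequence together with a geometric model for the 2-cycles of $M_{\wh F}$, and then to mimic Gordon--Litherland's classical computation of the intersection pairing via linking numbers, with the new wrinkle that not all of $H_1(F)$ maps to $H_2(M_{\wh F})$ when $g(\Sigma) > 0$. First I would observe that each $\Sigma^4_i \simeq (\Sigma \setminus \inter x) \times I \times I_+$ deformation retracts to $\Sigma \setminus \inter x$, a compact surface with nonempty boundary, so $H_2(\Sigma^4_i) = 0$; the Mayer--Vietoris sequence then shows that $\varphi$ is injective with image exactly $\ker(\psi)$. Since $N$ is an $I$-bundle over $F$, the inclusion $F \hookrightarrow N$ induces an isomorphism $i_* : H_1(F) \to H_1(N)$, so the composition $S := \varphi^{-1} \circ i_*|_K$ is automatically an isomorphism of abelian groups $K \to H_2(M_{\wh F})$.

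Second, I would spell out the geometric content of $S$. For $a \in K$ represented by a multicurve $\gamma \subset \text{int}(F)$, the pushoff $\wt\gamma = \tau[\gamma] \subset \wt F$ is null-homologous in each $\Sigma^4_i$ by the defining property of $K$, so bounds a 2-chain $C_i \subset \Sigma^4_i$. Gluing $C_1$ to $C_2$ along $\wt\gamma$ via the $\iota$-identification of the two copies of $N$ produces a 2-cycle $\wh C \subset M_{\wh F}$ representing $S(a)$. The substantive step is then verifying that $S$ intertwines the bilinear forms. For $a, b \in K$ with representatives $\gamma, \delta$ in general position, the intersection number $S(a) \cdot S(b)$ in $M_{\wh F}$ decomposes as a sum of two 4-dimensional intersection contributions, one in each $\Sigma^4_i$; each contribution is equivalently a linking number of pushoffs in $\Sigma^4_i$. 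Since $\Sigma^4_i$ is the thickening $(\Sigma \setminus \inter x) \times I \times I_+$ of the 3-dimensional slice $(\Sigma \setminus \inter x) \times I$, these 4-dimensional linking numbers can be computed directly from the product structure and identified with the $\Sigma$-relative linking numbers $\lk_\Sigma$ of \textsection\ref{S:lk} applied to the corresponding curves in $\Sigma \times I$. Summing over the two halves---with the orientation-reversing involution $\iota$ averaging out the asymmetry of $\lk_\Sigma$---reproduces the symmetric expression $\frac{1}{2}(\lk_\Sigma(\tau a, b) + \lk_\Sigma(\tau b, a)) = \Gs_F(a, b)$.

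The main obstacle is this final pairing calculation. In particular, it requires keeping rigorous track of which side of $F$ each pushoff lies on and handling one-sided (non-orientable) components of $\gamma$ for which $\wt\gamma$ is a connected double cover of $\gamma$ rather than two parallel copies. In the classical Gordon--Litherland setting, this case contributes the nontrivial framing terms to $\lla a, a \rra_F$, and the adaptation here requires checking that the $\iota$-identification on $N$ still interacts correctly with these non-orientable pieces in the new setting where $\Sigma^4_i$ carries nontrivial $H_1$. By contrast, the structural portion of the argument---the vanishing of $H_2(\Sigma^4_i)$, the injectivity of $\varphi$, the isomorphism $i_*$, and the explicit construction of $S$---is a direct generalization of the classical situation.
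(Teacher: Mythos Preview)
Your approach is essentially the same as the paper's: both use the displayed Mayer--Vietoris sequence to identify $H_2(M_{\wh F})$ with $\ker\psi\subset H_1(N)$ and then compute the intersection pairing via linking numbers in each piece $\Sigma^4_i$. The paper sidesteps your concern about non-orientable components by bounding the multicurve $\alpha\subset F$ itself (which is $\iota$-fixed, so $s_1-s_2$ glues to a cycle) rather than its transfer $\wt\alpha$, and then reintroduces the transfers only at the pairing stage via the identity $S(A)\cdot S(B)=\tfrac14\bigl(S([\wt\alpha])\cdot S([\beta])+S([\wt\beta])\cdot S([\alpha])\bigr)$, where $\wt\alpha$ and $\beta$ are already disjoint in $N$.
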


\begin{proof}
Consider the following map $S:K \to H_2(M_{\wh{F}})$. Given $A\in K$, choose a multicurve $\alpha\subset F$ with $[\alpha]=A$.  
Then $\alpha$ bounds properly embedded oriented surfaces $s_i\subset \Sigma^4_i$ for $i=1,2$.  Define $S(A)=[s_1]-[s_2]\in H_2(M_{\wh{F}})$.

To see that this is the required isomorphism $\left(K,\Gs_F\right)\to \left(H_2(M_{\wh{F}}),\cdot\right)$, let $A,B\in K$, represented respectively by multicurves $\alpha,\beta\subset F$. Then 
$\alpha$ and $\wt{\beta}$ are disjoint multicurves in $N$ with $[\wt{\alpha}]=2A$, $[\wt{\beta}]=2B$, $\iota(\alpha)=\alpha$, and $\iota(\wt{\beta})=\wt{\beta}$.  Hence:
\begin{align*}\pushQED{\qed}
S(A)\cdot S(B)
&=\frac{1}{4}\left(S([\wt{\alpha}])\cdot S([\beta])+S([\wt{\beta}])\cdot S([\alpha])\right)\\
&=\frac{1}{4}\left(\lk_\Sigma\left(\wt{\alpha},\beta\right)+\lk_\Sigma\left(\iota\wt{\alpha},\iota\beta\right)+\lk_\Sigma(\wt{\beta},\alpha)+\lk_\Sigma(\iota\wt{\beta},\iota\alpha)\right)\\
&=\frac{1}{2}\left(\lk_\Sigma\left(\wt{\alpha},\beta\right)+\lk_\Sigma(\wt{\beta},\alpha)\right)\\
&=\Gs_F(A,B).\qedhere
\end{align*}
\end{proof}


\subsubsection{Flyping caps}
Let $D\subset \Sigma$ be a locally {prime}, cellular alternating link diagram with checkerboard surfaces $B,W$.  Say that a plumbing cap $V$ for $B$ is a {\bf flyping cap}  if $V$ appears as in Figure \ref{Fi:FlypeCap}, left-center. There is then a corresponding flype move as shown in Figures \ref{Fi:FlypeCap} and \ref{Fi:flypereplumb}.  Namely, denoting the shadow of $V$ by $U$, the flype move proceeds along an annular neighborhood of a circle $\gamma\subset \Sigma$ comprised of the arc $V\cap W$ together with an arc in $U\cup\nu L$.  
(The resulting link diagram might be equivalent to $D$.) 
More formally:

\begin{prop}[c.f. Proposition 2.37 of \cite{flyping}]\label{P:flypereplumb}
Let $V$ be an flyping cap for $B$, $D\to D'$ the flype move corresponding to $V$, $B'$ and $W'$ the checkerboard surfaces from $D'$, and $B''$ the surface obtained by re-plumbing $B$ along $V$. Then $B'$ and $B''$ are isotopic, as are $W'$ and $W$. Hence, $D'$ is equivalent to the diagram determined by $B'',W$ via Theorem \ref{T:DBW}.%
\footnote{An analogous statement holds for flyping caps for $W$.}
\end{prop}

\begin{figure}
\begin{center}%
\includegraphics[width=\textwidth]{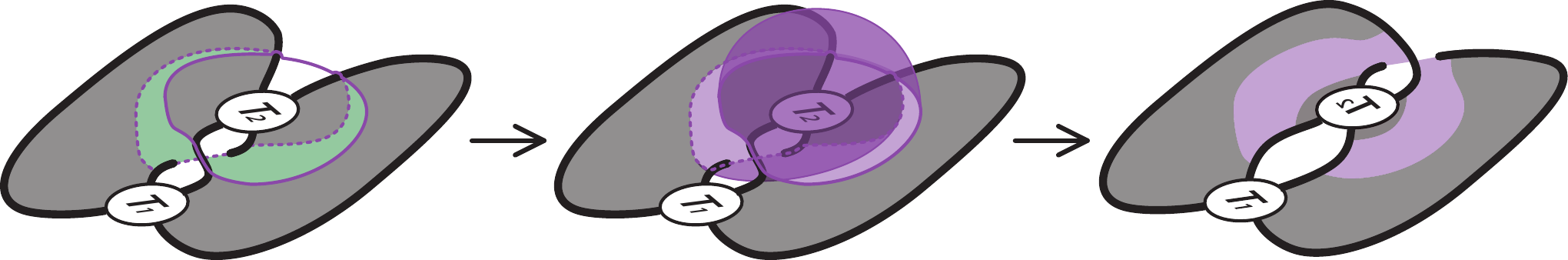}
\caption{A flype move corresponds to an isotopy of one checkerboard surface (here, $W$) and a re-plumbing of the other.}\label{Fi:flypereplumb}
\end{center}
\end{figure}

\begin{proof}
As in \cite{flyping}, Figure \ref{Fi:flypereplumb} demonstrates the isotopies.
\end{proof}

Conversely, if $\gamma$ is a flyping circle for $(\Sigma,D)$, then there is an flyping cap $V$ for $B$ (or $W$) with $V\cap W\subset \nu \gamma$ (resp. $V\cap B\subset \nu \gamma$). 

\begin{figure}
\labellist
\small\hair 4pt
\pinlabel \scalebox{.9}{Tangle 2} [c] at 744 247
\pinlabel \scalebox{.9}{Tangle 1} [c] at 744 -30
\pinlabel \scalebox{.9}{\reflectbox{Tangle 2}} [c] at 1360 247
\pinlabel \scalebox{.9}{Tangle 1} [c] at 1360 -30
\endlabellist
\begin{center}
\includegraphics[width=\textwidth]{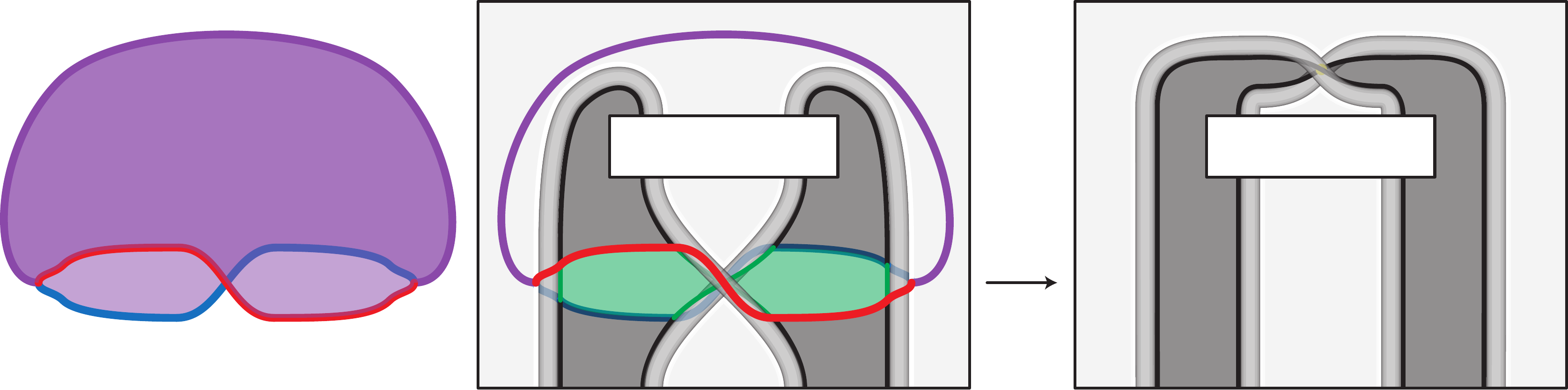}
\caption{A flyping cap and the associated flype move}\label{Fi:FlypeCap}
\end{center}
\end{figure}

\section{The flyping theorem in thickened surfaces}
\label{S:MenascoH}

The arguments in \textsection\textsection3-5 and 7-8 of \cite{flyping} have been revised so that they apply directly in the context of this paper (with the obvious replacements of $S^3$ with $\Sigma\times I$, $S^2$ with $\Sigma$, essential with end-essential, and prime with locally prime
): $B$, $W$ are the checkerboard surfaces from a locally prime, cellular alternating diagram $D\subset \Sigma$ of a link $L\subset \Sigma\times I$, $F$ is an end-essential positive-definite surface spanning $L$, $v_F$ is comprised of the vertical arcs at the crossings where $F$ has crossing bands, and $D_{F,W}$ is the diagram determined via Theorem \ref{T:DBW} by $F,W$.  One then implements Menasco's crossing ball setup, isotopes $F$ into {\it fair position}, and performs a sequence of isotopy and re-plumbing moves according to a hierarchy: one only performs each move $k$ if $F$ is in $(k-1)$-good position, meaning that $F$ is in fair position and none of Moves 1 through $k-1$ are possible. See \cite{flyping} for the notations $C,v,\wh{W},S_\pm$ etc. associated with the crossing ball setup and for the precise definitions of fair position and Moves 1-10. Moves 1-9, all of which are isotopy moves, appear in Figure \ref{Fi:Moves}.  Move 10 is a re-plumbing move and is more complicated; see \cite{flyping}.

\begin{figure}
\begin{center}
\labellist
\small\hair 4pt
\pinlabel {{1}} [c] at 152 65
\tiny\hair 4pt
\pinlabel {$\red{\boldsymbol{\alpha_+}}$} [l] at 27 80
\pinlabel {$\Navy{\boldsymbol{\alpha_-}}$} [l] at 70 30
\endlabellist
\includegraphics[width=.35\textwidth]{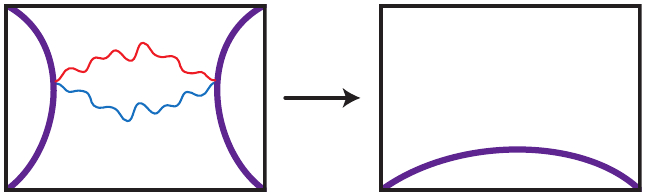}
\hfill
\labellist
\small\hair 4pt
\pinlabel {{3}} [c] at 71 58
\tiny\hair 4pt
\pinlabel {$\violet{\boldsymbol{\alpha}}$} [l] at 5 47
\pinlabel {$\red{\boldsymbol{\beta}}$} [l] at 25 30
\endlabellist
\includegraphics[width=.275\textwidth]{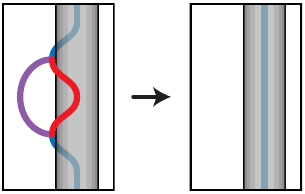}
\hfill
\labellist
\small\hair 4pt
\pinlabel {{3}} [c] at 71 58
\tiny\hair 4pt
\pinlabel {$\violet{\boldsymbol{\alpha}}$} [l] at 5 47
\pinlabel {$\Navy{\boldsymbol{\beta}}$} [l] at 25 30
\endlabellist
\includegraphics[width=.275\textwidth]{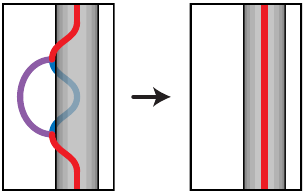}\\
\labellist
\small\hair 4pt
\pinlabel {{2}} [c] at 166 85
\pinlabel {{2}} [c] at 355 85
\tiny\hair 4pt
\pinlabel {$\violet{\boldsymbol{\beta}}$} [l] at 118 128
\pinlabel {$\violet{\boldsymbol{\beta_+}}$} [l] at 20 139
\pinlabel {$\violet{\boldsymbol{\beta'_+}}$} [l] at -2 115
\pinlabel {$\violet{\boldsymbol{\beta_-}}$} [l] at 102 8
\pinlabel {$\violet{\boldsymbol{\beta'_-}}$} [l] at 122 33
\pinlabel {$\sepia{\boldsymbol{\sigma_+}}$} [l] at -2 136
\pinlabel {$\sepia{\boldsymbol{\sigma_-}}$} [l] at 310 13
\pinlabel {$\violet{\boldsymbol{\beta_-}}$} [l] at 291 8
\pinlabel {$\violet{\boldsymbol{\beta'_-}}$} [l] at 311 33
\endlabellist
\includegraphics[width=\textwidth]{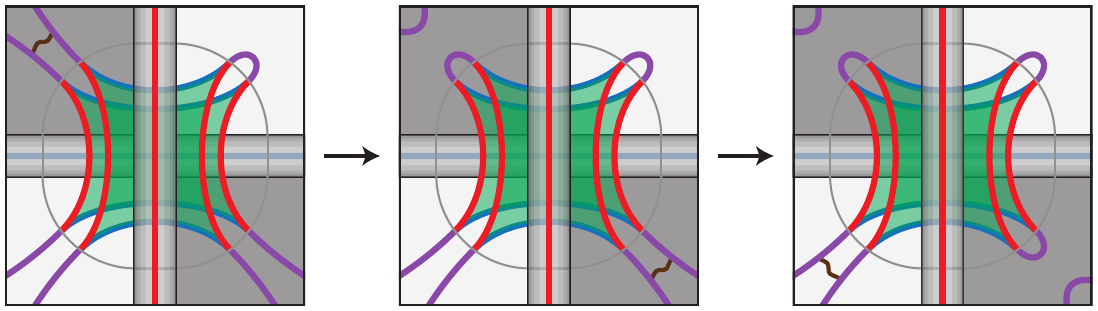}
\labellist
\small\hair 4pt
\pinlabel {{4}} [c] at 147 94
\tiny\hair 4pt
\pinlabel {$\violet{\boldsymbol{\alpha}}$} [l] at 102 145
\pinlabel {$\red{\boldsymbol{\beta_L}}$} [l] at 27 10
\pinlabel {$\red{\boldsymbol{\beta_C}}$} [l] at 87 100
\endlabellist
\includegraphics[width=.475\textwidth]{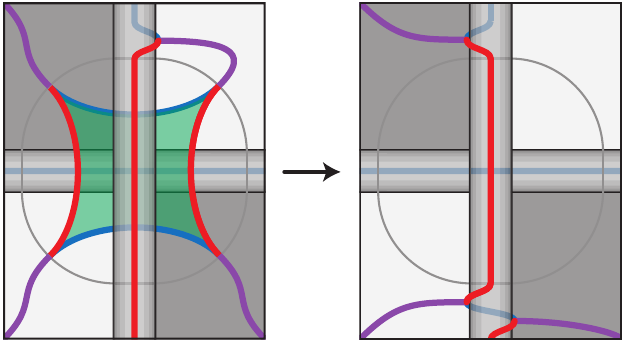}\hfill
\labellist
\small\hair 4pt
\pinlabel {{4}} [c] at 147 94
\tiny\hair 4pt
\pinlabel {$\violet{\boldsymbol{\alpha}}$} [l] at 0 147
\pinlabel {$\Navy{\boldsymbol{\beta_L}}$} [l] at 27 10
\pinlabel {$\Navy{\boldsymbol{\beta_C}}$} [l] at 5 100
\endlabellist
\includegraphics[width=.475\textwidth]{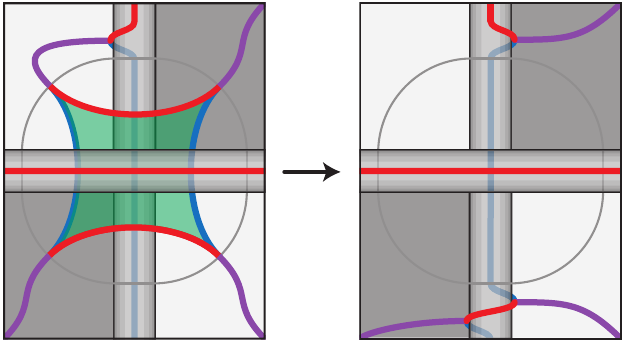}\\
\labellist
\tiny\hair 4pt
\pinlabel {$\violet{\boldsymbol{\omega}}$} [l] at 95 160
\pinlabel {$\Navy{\boldsymbol{\alpha}}$} [l] at 78 128
\pinlabel {$\red{\boldsymbol{\alpha'}}$} [l] at 81 140
\endlabellist
\includegraphics[width=.485\textwidth]{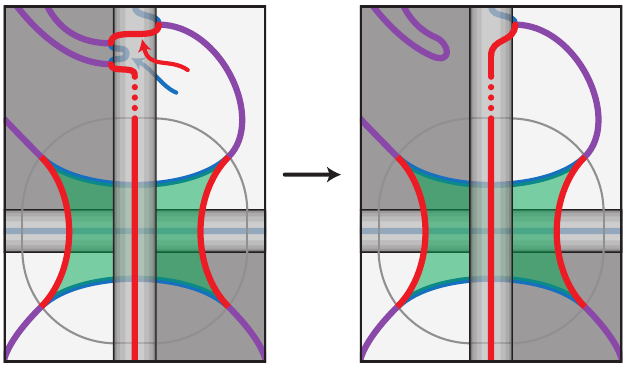}\hfill
\labellist
\tiny\hair 4pt
\pinlabel {$\violet{\boldsymbol{\omega}}$} [l] at 12 160
\pinlabel {$\red{\boldsymbol{\alpha}}$} [l] at 22 128
\pinlabel {$\Navy{\boldsymbol{\alpha'}}$} [l] at 14 140
\endlabellist
\includegraphics[width=.485\textwidth]{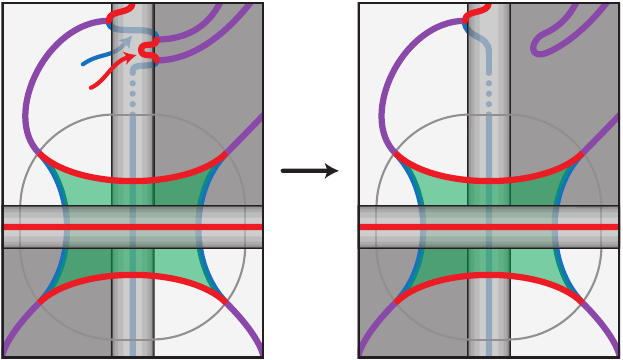}\\
\labellist
\tiny\hair 4pt
\pinlabel {$\violet{\boldsymbol{\omega}}$} [l] at 92 88
\pinlabel {$\red{\boldsymbol{\alpha'}}$} [l] at 60 95
\pinlabel {$\Navy{\boldsymbol{\alpha}}$} [l] at 55 70
\endlabellist
\includegraphics[width=.485\textwidth]{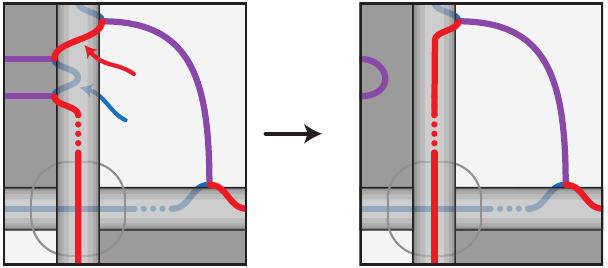}\hfill
\labellist
\tiny\hair 4pt
\pinlabel {$\violet{\boldsymbol{\omega}}$} [l] at 3 90
\pinlabel {$\Navy{\boldsymbol{\alpha'}}$} [l] at 36 94
\pinlabel {$\red{\boldsymbol{\alpha}}$} [l] at 38 72
\endlabellist
\includegraphics[width=.485\textwidth]{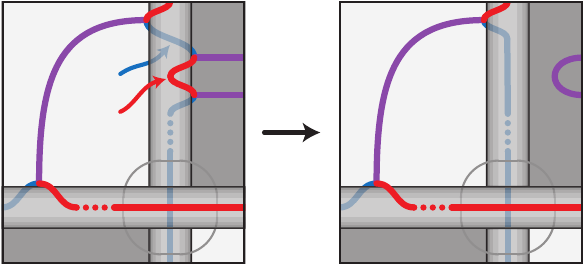}\\
\labellist
\small\hair 4pt
\pinlabel {{6}} [c] at 183 87
\tiny\hair 4pt
\pinlabel {$\violet{\boldsymbol{\alpha}}$} [l] at 123 127
\endlabellist
\includegraphics[height=.25\textwidth]{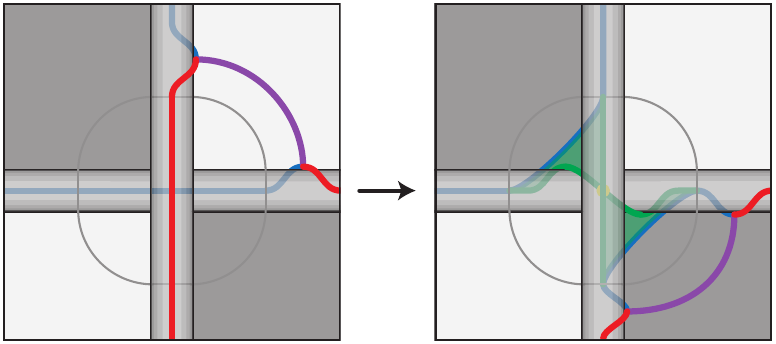}
\labellist
\small\hair 4pt
\pinlabel {{7}} [c] at 100 80
\pinlabel {{8}} [c] at 362 80
\pinlabel {{9}} [c] at 623 80
\endlabellist
\includegraphics[width=\textwidth]{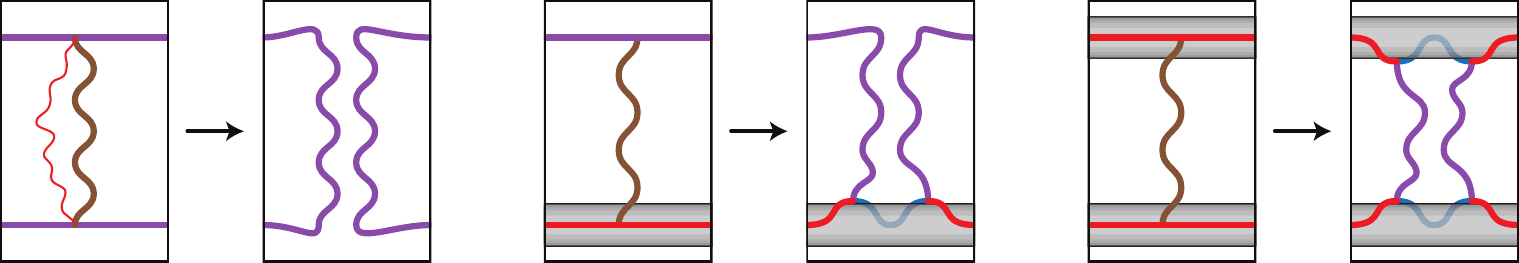}
\caption{Moves {1}-{9}}
\label{Fi:Moves}
\end{center}
\end{figure}

A few details are worth noting.  First, one must be more careful with push-through moves (see Definition 3.10 of \cite{flyping}) in thickened surfaces than in $S^3$.  The definition is the same (because it was written with this paper in mind!), but in addition to the three pictures shown top in Figure 19 of \cite{flyping}, three more pictures are possible.  See Figure \ref{Fi:Pt}.  In any case, if we wish to perform (or observe the possibility of) a push-through move along an arc $\alpha$ whose endpoints lie on a circle $\gamma$, we must now check that $\alpha$ is parallel in $S_+$ into $\gamma$; in \cite{flyping}, this was free.  Importantly, however, this is always the case.\footnote{In \cite{flyping}, see the definition of Move 2 and the proofs of Lemmas 3.22, 4.1, and 5.3 and of Propositions 8.2 and 8.3.}

\begin{figure}
\begin{center}
\labellist
\tiny\hair 4pt
\pinlabel {$\red{\boldsymbol{\beta}}$} [l] at 270 495
\pinlabel {$\brown{\boldsymbol{\alpha}}$} [l] at 220 435
\pinlabel {$\red{\boldsymbol{\beta}}$} [l] at 705 495
\pinlabel {$\brown{\boldsymbol{\alpha}}$} [l] at 620 435
\pinlabel {$\red{\boldsymbol{\beta}}$} [l] at 1140 505
\pinlabel {$\brown{\boldsymbol{\alpha}}$} [l] at 1020 435
\endlabellist
\includegraphics[width=.8\textwidth]{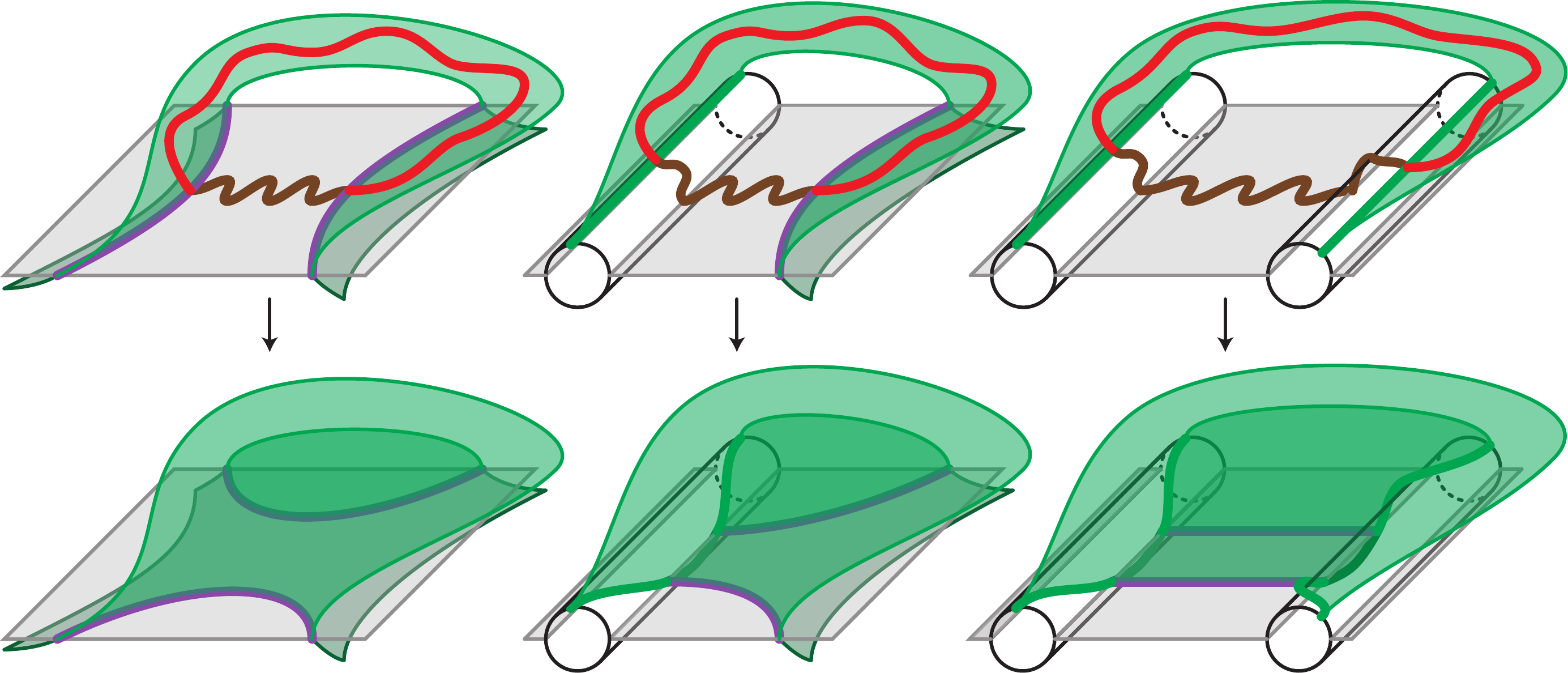}
\caption{Push-through moves in $\Sigma\times I$ need not appear as in Figure 19 of \cite{flyping}.}
\label{Fi:Pt}
\end{center}
\end{figure}

Second, whereas in \cite{flyping} every circle of $F\cap S_\pm$ was inessential in $S_\pm\approx S^2$, this property holds here only because the assumption that $F$ is end-incompressible allows us to require that $S_+\cup S_-$ cuts $F$ into disks (c.f. Definition 3.2 (h) and Lemma 3.3 of \cite{flyping}).  

Third, Sublemma 5.2 of \cite{flyping} implies there that the circles of $F\cap S_+$ are mutually nested, but this is less clear here. The proof of Lemma 5.3 of \cite{flyping} is thus written with this paper in mind, and is slightly more complicated as a result.

Adapting the arguments from \textsection\textsection3-5, 7-8 of \cite{flyping} thus gives:

\begin{theorem}%
\label{T:plumb}
If $D=D_{B,W}$ is a locally prime, cellular alternating diagram of $(\Sigma,L)$, then any end-essential, positive definite surface $F$ spanning $L$ is plumb-related to $B$
; likewise for end-essential negative-definite surfaces and $W$.\ari
\end{theorem}

\begin{cor}%
\label{C:tait}
With $F$ and $D$ as in Theorem \ref{T:plumb},
$\beta_1(B)=\beta_1(F)$ and $s(B)=s(F')$.\ari\footnote{This is also true if $L$ is non-stabilized and/or locally prime.} \end{cor}

\begin{theorem}[Part of Tait's extended first conjecture \cite{greene,kauff,mur,this,tur}]\label{T:tait1}
If $D,D'\subset\Sigma$ are alternating diagrams of a link $L\subset\Sigma\times I$, neither containing removable nugatory crossings, then $D$ and $D'$ have the same number of crossings.\footnote{When $D$ and $D'$ are locally {prime} and cellular alternating, Fact \ref{F:PGreene}, Theorems \ref{T:EndEss} and \ref{T:plumb}, and Corollary \ref{C:tait} immediately imply this. The general case then follows, as the number of crossings is additive under (de)stabilization, diagrammatic connect sum, and split union.}
\end{theorem}

\begin{theorem}
\label{T:Bad}
If $F$ is in 9-good position, then $F$ contains no saddle disks: $F\cap C=v_F$; hence, every circle $\gamma$ of $F\cap S_+$ is a flyping circle, and $D_{F,W}$ is related to $D$ by a sequence of flypes that preserve the isotopy class of $W$.\ari
\end{theorem}

\begin{theorem}[Tait's extended flyping conjecture]\label{T:mainthick}
All locally {prime}, cellular alternating diagrams $D=D_{B,W}$ and $D'=D_{B',W'}$ of the same link $L\subset \Sigma\times I$ are related by a sequence of flypes $D\to\cdots\to D''\to\cdots\to D'$ in which $D\to \cdots\to D''$ preserves the isotopy class of $W$ and $D''\to\cdots\to D'$ preserves the isotopy class of $B'$. 
\end{theorem}

Since writhe is invariant under flypes (recall Observation \ref{O:Flype}) and additive under diagrammatic connect sum and disjoint union, we obtain a new geometric proof of Tait's second conjecture:

\begin{theorem}[Tait's extended second conjecture \cite{bk18,bks19}]\label{T:tait2}
All locally {prime}, cellular alternating diagrams of a given link $L\subset \Sigma\times I$ have the same writhe.
\end{theorem}

Theorem \ref{T:mainthick} implies that, unlike a classical link and a link in $S^2\times I$, a link in a thickened surface of positive genus is not necessarily isotopic to the link obtained by reflecting horizontally (in the projection surface) and then vertically. More precisely, let $D\subset\Sigma$ be a locally {prime}, cellular alternating diagram of a link $L\subset \Sigma\times I$; let $\phi:\Sigma\to\Sigma$ be an orientation-reversing involution; let $D'\subset\Sigma$ be the diagram obtained from $\phi(D)$ by reversing all crossing information; and let $L'\subset \Sigma\times I$ be the link represented by $D'$. Note that $L'$ is the image of $L$ under the map $\Sigma\times I\to \Sigma\times I$ given by $(x,t)\mapsto (\phi(x),-t)$.

\begin{cor}\label{C:Entire}
With the setup above, if $D$ is locally {prime} and cellular alternating, then the links $L$ and $L'$ are isotopic in $\Sigma\times I$ if and only if the diagrams $D$ and $D'$ are flype-related on $\Sigma$. In particular, this is always true if $g(\Sigma)=0$, but not necessarily if $g(\Sigma)>0$. 
\end{cor}

\begin{example}\label{Ex:Entire}
The diagrams on $T^2$ shown right in Figure \ref{Fi:Entire} admit no non-trivial flypes and are non-isotopic; thus, by Corollary \ref{C:Entire}, they represent non-isotopic links in $T^2\times I$.
\end{example}

\section{Virtual links}\label{S:Virtual}

A {\it virtual link diagram} is the image of an immersion $\bigsqcup S^1\to S^2$ in which all self-intersections are transverse double-points, some of which are labeled with over-under information. These labeled points are called {\it classical crossings}, and the other double-points are called {\it virtual crossings}. Traditionally, virtual crossings are marked with a circle, as in Figure \ref{Fi:RMoves}.  
A {\it virtual link} is an equivalence class of such diagrams under generalized Reidemeister moves (R-moves), as shown in Figure \ref{Fi:RMoves}.  
There are seven types of such moves, the three {\it classical} moves and four {\it non-classical} moves.

\begin{figure}
\begin{center}
\includegraphics[width=\textwidth]{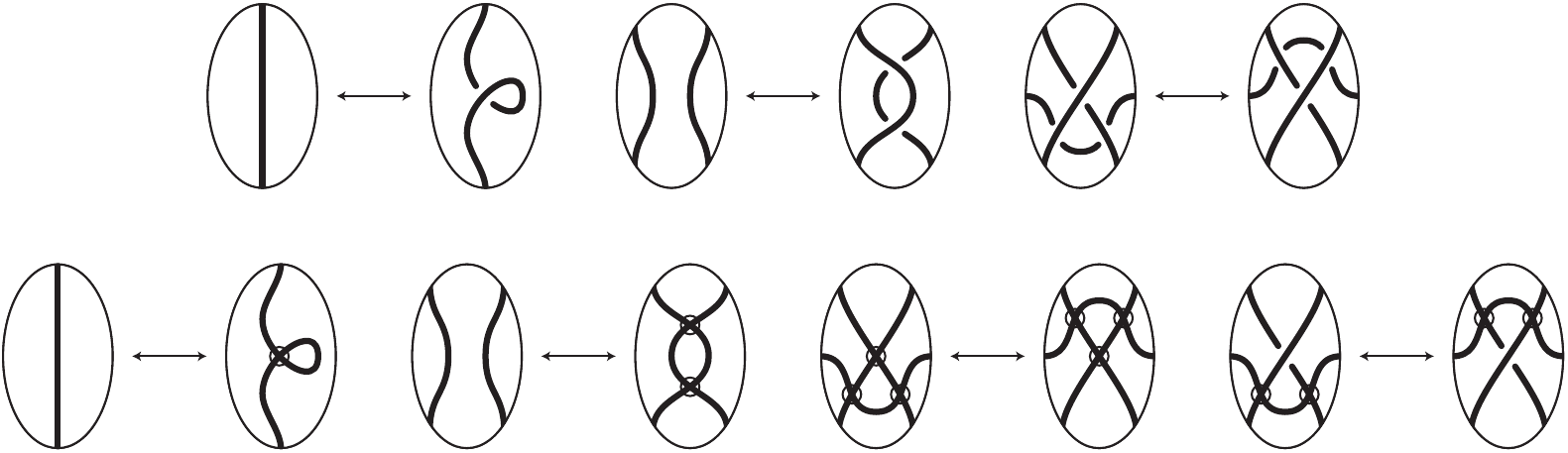}
\caption{Generalized Reidemeister moves}
\label{Fi:RMoves}
\end{center}
\end{figure}

\begin{notation}
Given a virtual link diagram $V\subset S^2$, let $[V]$ denote the set of all virtual diagrams related to $V$ by planar isotopy and non-classical R-moves.
\end{notation}

\begin{definition}
A virtual link diagram $V$ is {\bf nonsplit} if every $V'\in[V]$ is connected.
\end{definition}

\subsection{Correspondences}\label{S:Correspondence}

By work of Kauffman \cite{kauff98}, Kamada--Kamada \cite{kaka}, and Carter--Kamada--Saito \cite{cks02}, there is a bijective correspondence between virtual links and stable equivalence classes of links in thickened surfaces.  In \cite{primes}, the author introduces the following correspondence between equivalence classes of the associated {\it diagrams}:

\begin{figure}
\begin{center}
\includegraphics[width=\textwidth]{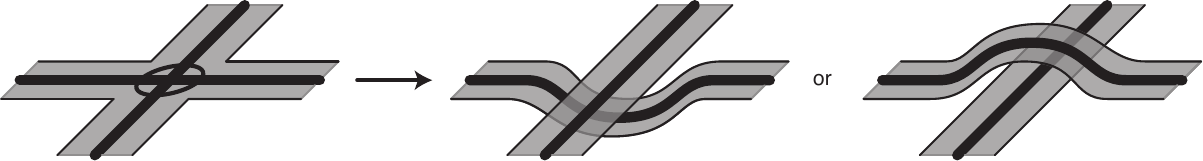}
\caption{Converting the neighborhood of a virtual link diagram to an abstract link diagram}
\label{Fi:VtoA}
\end{center}
\end{figure}

\begin{corr}\label{C:Diagrams}
The following gives a bijection from equivalence classes $[V]$ of nonsplit virtual link diagrams to cellular link diagrams on connected closed surfaces:

Choose $V\in[V]$, take a regular neighborhood $\nu V$ of $V$ in $S^2$, %
modify $\nu V$ near each virtual crossing of $V$ as shown in Figure \ref{Fi:VtoA},%
\footnote{At this intermediate stage, we have an {\it abstract link diagram}, which 
we will not need again.}
and cap off each boundary component (abstractly) with a disk.
\end{corr}

There is an important subtlety in Correspondences \ref{C:Diagrams} and \ref{C:Links}, pertaining to how one constructs a virtual link diagram from a given cellular pair $(\Sigma,D)$.  The idea of this construction is to embed $(\Sigma,D)$ into $S^3$ and project.  The subtlety is that, under this embedding, all crossings of $D$ must lie on the front of $\Sigma$.  See \textsection4.2 of \cite{primes} for details.

Correspondence \ref{C:Diagrams} gives a new diagrammatic perspective on a well-known correspondence \cite{kauff98,kaka,cks02}:

\begin{corr}\label{C:Links}
There is a correspondence between virtual links and stable equivalence classes of links in thickened surfaces: choose any representative diagram and apply the diagrammatic Correspondence \ref{C:Diagrams}.
\end{corr}

\subsection{Prime virtual link( diagram)s}\label{S:VPrime}

We adopt the following definitions from \cite{primes}:

\begin{definition}\label{D:VKPrime}
A diagrammatic connect sum decomposition $V=V_1\#V_2$ of a virtual link diagram is {\it nontrivial} if both $V_1$ and $V_2$ have classical crossings. 
A virtual link diagram $V$ is {\bf prime} if it has no nontrivial connect sum decomposition.
A virtual link $K\neq \bigcirc $ is {\bf prime} if for every diagram $V$ of $K$ and every decomposition $V=V_1\#V_2$, one diagram $V_i$ represents the classical unknot and the other represents $K$.
\end{definition}
 
Primeness of virtual links corresponds as follows to primeness of nonstabilized pairs $(\Sigma,L)$.  For simplicity, we add an assumption of checkerboard colorability, which does not appear in the full version of the theorem in \cite{primes}.

 \begin{theorem}[c.f. Theorem 5.7 of \cite{primes}]\label{T:01}
Given a nonsplit checkerboard colorable virtual link $K$ and the corresponding nonstabilized link $L$ in a thickened surface $\Sigma\times I$, 
\begin{enumerate}
\item $(\Sigma,L)$ is locally prime if and only if $K$ admits no nontrivial connect sum decomposition $K=K_1\#K_2$ in which $K_1$ is a classical link and $g(K_2)=g(K)$, and
\item $K$ is prime if and only if $(\Sigma,L)$ is pairwise prime.
\end{enumerate}
\end{theorem}









\subsection{Determining primeness of alternating virtual link diagrams}

Theorem \ref{T:ObviouslyPrimeEtc} states that an alternating pair $(\Sigma,L)$ is composite (in some sense) 
if and only if it is ``obviously so'' in a given reduced alternating diagram. To translate this result to alternating virtual link diagrams $V$, one must describe what it means for such $V$ to be ``obviously" composite (in either the local or pairwise sense). The solution in \cite{primes} is to use a new tool called {\it lassos} to capture the salient features of all diagrams in $[V]$ and find some suitable $V'\in[V]$.

A lasso for $V$ is a disk $X\subset S^2$ that contains all classical crossings of $V$ and no virtual ones, and $X$ is {\it acceptable} if the part of the diagram in $X$ is connected and the part of the diagram outside $X$ does not admit an ``obvious" simplification (see \cite{primes} for details). In \cite{primes}, the author proves (the proof of the first result is constructive):

\begin{prop}[Proposition 3.8 of \cite{primes}]\label{P:VLassoAcc}
Given a nonsplit virtual link diagram $V$ some $V'\in [V]$ admits an acceptable lasso.
\end{prop}

\begin{fact}[c.f. Proposition 7.3 of \cite{primes}]
Let $V\subset S^2$ be a nonsplit virtual link diagram corresponding to a cellular pair $(\Sigma,D)$.  Choose $V'\in[V]$ which admits an acceptable lasso $X$.  Given a crossing $c'\in V$ corresponding to a crossing $c\in D$, $c$ 
removably nugatory 
if and only if some disk $U\subset X$ has $\partial U\cap V'=\{c'\}$;
\end{fact}

In other words, $V'$, chosen constructively from $[V]$ so that it admits an acceptable lasso, has an ``obvious'' removable nugatory crossing whenever $(\Sigma,D)$ has a removable nugatory crossing. Moreover, $V'$ is (resp. locally) prime if and only if every $V''\in[V]$ is (resp. locally prime); see Observations 5.14-5.14 of \cite{primes}.  With this context, the culminating result of \cite{primes} is that, in the alternating case, one can determine by inspecting $V'$ whether or not it represents prime or locally prime virtual link:

\begin{theorem}[Theorem 7.8 of \cite{primes}]\label{T:VSplitEtc}
Let $V$ be an alternating 
diagram of a nonsplit virtual link $K$, and consider a diagram $V'\in[V]$ which 
admits an acceptable lasso $X$. Assume that $V'$ has at least one virtual crossing. Then:
\begin{enumerate}[label=(\arabic*)]
\item When $V'$ has no nugatory crossings, $K$ is prime if and only if $V'$ is prime.
\item When $V'$ has no removably nugatory crossings, $K$ is locally prime if and only if $V'$ is locally prime.
\end{enumerate}
\end{theorem}

\subsection{Tait's conjectures for virtual links}\label{S:VFlype}

\begin{definition}\label{D:VFlype}
{A (classical) {\bf flype} on a virtual link diagram appears as in Figure \ref{Fi:flype}, where $T_1$ contains no virtual crossings.}
\end{definition}

\begin{theorem}\label{T:virtual}
Any two locally {prime}, alternating diagrams of a given virtual link $\wt{L}$ are related by non-classical R-moves and (classical) flypes.%
\end{theorem}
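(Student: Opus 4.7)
The plan is to use Correspondence \ref{C:Diagrams} to transport the problem to the thickened-surface setting, apply Theorem \ref{T:mainthick} there, and then translate the resulting flype-relation back to the virtual world modulo virtual Reidemeister moves.

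First I would take two weakly prime alternating virtual diagrams $V, V'$ of $\wt L$ and pass via Correspondence \ref{C:Diagrams} to the cellularly embedded link diagrams $D\subset\Sigma$ and $D'\subset\Sigma'$ associated to $[V]$ and $[V']$. Since $V$ is alternating (its classical crossings alternate), so is $D$ on $\Sigma$; combined with cellular embeddedness, this makes $D$ fully alternating, and likewise for $D'$. By Corollary \ref{C:bk36++}, both $(\Sigma,L)$ and $(\Sigma',L')$ are therefore nonstabilized. Because $V$ and $V'$ represent the same virtual link $\wt L$, Correspondence \ref{C:Links} combined with Theorem \ref{T:kup} yields a pairwise homeomorphism $(\Sigma\times I,L)\to(\Sigma'\times I,L')$; I may therefore assume $D,D'\subset\Sigma$ live on the same surface. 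Finally, Definition \ref{D:vsplit} converts weak primeness of $V,V'$ directly into weak primeness of $(\Sigma,D)$ and $(\Sigma,D')$.

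Next I would apply Theorem \ref{T:mainthick}, which yields a finite sequence of flypes on $\Sigma$ carrying $D$ to $D'$. It remains to promote this sequence to a sequence of classical flypes interleaved with virtual Reidemeister moves on the virtual side. Each flype on $\Sigma$ is supported in a disk $X\subset\Sigma$ as in Figure \ref{Fi:flype}. Using Construction \ref{C:front}, I would choose an embedding $\phi:\Sigma\to\wh{S^3}$ such that $\phi(X)$ lies entirely on the front of $\phi(\Sigma)$ and $\pi\circ\phi$ restricts to a homeomorphism on a neighborhood of $X$; with such an embedding, the tangle $T_1$ being rotated projects injectively to the plane with no virtual crossings, and the surface flype descends to an honest classical flype of the resulting virtual diagram. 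The transition between two embeddings of $\Sigma$---equivalently, between two representatives in a class $[V]$---is then realized by the virtual pass moves constructed in the proof of Correspondence \ref{C:Diagrams}, each of which decomposes as a short sequence of virtual (non-classical) Reidemeister moves.

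The main obstacle, and the step that will require the most care, is this final translation. The delicate point is arranging the embedding $\phi$ so that the entire flype disk $X$ lies on the front of $\phi(\Sigma)$, with no strand of $D\setminus X$ intruding above $\phi(X)$ in the projection; this must be set up afresh for each flype in the sequence. I would achieve it by iterated virtual pass moves, rerouting the complementary arcs of $D$ out of $\pi\circ\phi(X)$ exactly as in the proof of Correspondence \ref{C:Diagrams}, before performing the flype. Assembling everything then produces the required chain of moves: virtual Reidemeister moves to reposition, a classical flype, more virtual Reidemeister moves to clean up, repeated for each surface flype produced by Theorem \ref{T:mainthick}, and concluding with a final batch of virtual Reidemeister moves that identifies the terminal virtual diagram with $V'$ in $[V']$.
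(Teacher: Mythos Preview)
Your proposal is correct and follows essentially the same strategy as the paper: pass to cellularly embedded diagrams via Correspondence~\ref{C:Diagrams}, identify the surfaces via Kuperberg's theorem, apply Theorem~\ref{T:mainthick} to obtain a flype sequence on $\Sigma$, and then realize each surface flype as a classical virtual flype by choosing an appropriate embedding $\Sigma\hookrightarrow\wh{S^3}$. The only technical difference is in the embedding step: where you invoke Construction~\ref{C:front} for the disk $X$ and then clear intruding arcs by virtual pass moves, the paper instead builds a lasso $U\supset X$ containing \emph{all} crossings (via a spanning tree for the quotient graph $G/X$) and chooses $\phi$ so that $\pi|_{\phi(U)}$ has no critical points and $\pi\circ\phi(U)\cap\pi\circ\phi(D\setminus U)=\varnothing$, which accomplishes the same thing in one stroke.
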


\begin{proof}
Let $V$ and $V'$ be two such diagrams, and let $(\Sigma,D)$ and $(\Sigma',D')$ be the associated pairs under Correspondence \ref{C:Diagrams}.  By Kuperburg's theorem, we may identify $\Sigma\equiv \Sigma'$, and by Theorem \ref{T:mainthick}, there is a sequence of flype moves on $\Sigma$ taking $D$ to $D'$:
\[D=D_0\to D_1\to\cdots \to 
D_n=D'.\]  We will show for each $i=1,\hdots, n$ that there are virtual diagrams $V^2_{i-1}$ and $V^1_i$ which correspond to $(\Sigma,D_{i-1})$ and $(\Sigma,D_i)$ and which are related by a flype.  This will produce a sequence of virtual diagrams 
\begin{equation*}\label{E:VFlype}
V=V_0^1\to V_0^2\to V_1^1\to V_1^2\to\cdots\to 
\to V_n^1\to V_n^2=V'
\end{equation*}
where each $V_i^1\to V_i^2$ comes from a sequence of virtual R-moves and each $V_{i-1}^2\to V_i^1$ comes from a flype.  

Consider a flype $D_{i-1}\to D_i$; it is supported within a disk $X\subset\Sigma$.\footnote{That is, take $X$ to be the oval-shaped disk shown left in Figure \ref{Fi:flype}.}  Denote the quotient map $q:\Sigma\to \Sigma/X\equiv \Sigma$, and denote the underlying graph of $D_{i-1}$ by $G$.  Choose a spanning tree $T$ for the 4-valent graph $q(G)\subset\Sigma/X$, and take a regular neighborhood $\nu T$. Denote $U=q^{-1}(\nu T)$, and observe that $U$ is a disk in $\Sigma$ that contains $X$ and all crossings of $D_{i-1}$.\footnote{Thus, $U$ is a lasso for $(\Sigma,D_{i-1})$.}

Choose an embedding $\phi:\Sigma\to \wh{S^3}$ such that $\pi|_{\phi(U)}$ has no critical points and $\pi\circ\phi(U)\cap \pi\circ\phi(D\setminus U)=\varnothing$. Denote $f=\pi\circ\phi$ and $f(D_{i-1})=V^2_{i-1}$.  Observe that $f|_X$ is a homeomorphism onto its image, and so the disk $f(X)$ supports a flype $V^2_{i-1}\to V^1_i$ where $V^1_i$ corresponds to $(\Sigma,D_i)$.  

Thus, as needed, each $V_{i-1}^2\to V_i^1$ comes from a flype.  To complete the proof, we note that each $V_i^1\to V_i^2$ comes from a sequence of virtual R-moves, due to Correspondence \ref{C:Diagrams}, since both $V_i^1$ and $V_i^2$ correspond to the same cellularly embedded diagram $D_i$ on $\Sigma$.
\end{proof}

Since crossing number and writhe are invariant under flypes, we can also extend more parts of Tait's conjectures to virtual links:

\begin{theorem}\label{T:tait12v}
All locally {prime}, alternating diagrams of a given virtual link have the same crossing number and writhe.
\end{theorem}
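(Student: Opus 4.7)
The plan is to reduce Theorem \ref{T:tait12v} immediately to the virtual flyping theorem, Theorem \ref{T:virtual}, which has just been established. Given any two weakly prime, alternating diagrams $V$ and $V'$ of a virtual link $\wt{L}$, Theorem \ref{T:virtual} produces a finite sequence of diagrams joining $V$ to $V'$ in which each consecutive pair differs either by a single virtual (non-classical) Reidemeister move or by a classical flype. It therefore suffices to verify that both the number of classical crossings and the writhe are invariant under each of these two types of moves.

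For a classical flype, this is immediate: flypes only rearrange a tangle $T_1$ and the single classical crossing attached to it, without creating or destroying classical crossings, and they preserve the sign of that crossing. So both the crossing number (counting only classical crossings, as is standard for virtual diagrams) and the writhe are unchanged. For virtual Reidemeister moves, the four non-classical moves, including the mixed move involving two virtual crossings and one classical crossing, only add, remove, or rearrange virtual crossings; the classical crossings and their signs are unaffected. Hence crossing number and writhe are again preserved.

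Combining these observations along the sequence provided by Theorem \ref{T:virtual}, the classical crossing number and writhe of $V$ equal those of $V'$. There is no real obstacle here: all the work is carried by Theorem \ref{T:virtual}, and the present statement is a cosmetic corollary in the same spirit as Theorem \ref{T:tait12} was for Theorem \ref{T:mainthick}.
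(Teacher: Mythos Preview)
Your proof is correct and follows exactly the paper's approach: the paper simply remarks that crossing number and writhe are invariant under flypes (and, implicitly, under virtual Reidemeister moves) and deduces Theorem~\ref{T:tait12v} directly from Theorem~\ref{T:virtual}. Your argument spells out the same reasoning in slightly more detail.
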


\subsection{Non-uniqueness of minimal-genus connect sum}

Finally, as an additional corollary, we observe that, whereas minimal-genus connect sum is a well-defined operation for classical knots and for any classical knot with any virtual knot, minimal-genus connect sum is {\it not} a well-defined operation for virtual knots:

\begin{figure}
\begin{center}
\includegraphics[width=\textwidth]{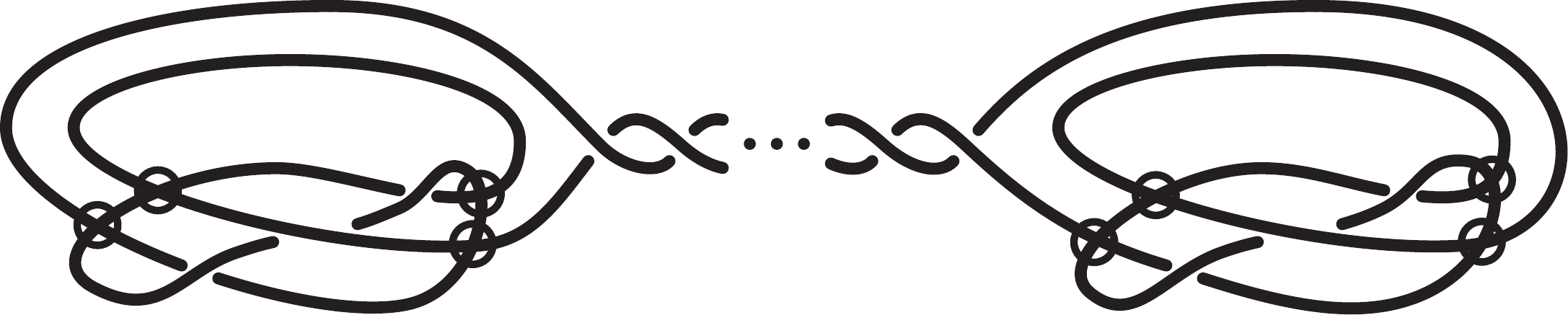}
\caption{There are infinitely many different ways to take minimal-genus connect sums of any two non-classical alternating links.}
\label{Fi:ConnectSumN}
\end{center}
\end{figure}

\begin{cor}\label{C:NonUCS}
Given any two non-classical, locally {prime}, alternating virtual links $K_1$ and $K_2$, there are infinitely many distinct virtual links $K$ with $g(K)=g(K_1)+g(K_2)$ that decompose as a connect sum of $K_1$ and $K_2$.
\end{cor}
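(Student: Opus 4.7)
The plan is to construct, for each $n \in \mathbb{Z}_{\geq 0}$, a weakly prime, fully alternating virtual diagram $V^{(n)}$ that decomposes as a connect sum of $V_1$ and $V_2$, such that the crossing numbers $c(V^{(n)})$ are strictly increasing in $n$. Theorem \ref{T:tait12v} then immediately forces the corresponding virtual links to be pairwise distinct, since weakly prime, alternating diagrams of a given virtual link must share a common crossing number.

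Using Correspondence \ref{C:Links}, I would first pass to the nonstabilized, cellularly embedded, weakly prime, fully alternating representative $(\Sigma_i, D_i)$ of each $V_i$; since $V_i$ is non-classical, $g(\Sigma_i) \geq 1$, so each $\Sigma_i$ carries essential simple closed curves. Set $\Sigma = \Sigma_1 \# \Sigma_2$. For each $n$, I would construct a cellularly embedded diagram $(\Sigma, D^{(n)})$ as suggested by Figure \ref{Fi:ConnectSumN}: start from the standard pairwise connect sum $(\Sigma_1, D_1) \# (\Sigma_2, D_2)$ at chosen edges, and then route one of the bridging strands through a chain of $n$ extra classical crossings with $D_1$, obtained by winding along an essential simple closed curve $\eta \subset \Sigma_1$ that is confined to a small tubular neighborhood of the connect-sum annulus. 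Alternating over/under information is assigned to each new crossing so that $D^{(n)}$ remains fully alternating on $\Sigma$, and the winding is arranged so that a separating annulus $A^{(n)} \subset \Sigma$ continues to meet the underlying link in exactly two points, exhibiting $V^{(n)}$ as a genuine connect sum of $V_1$ and $V_2$.

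Each $D^{(n)}$ is cellularly embedded on $\Sigma$ (the added crossings only subdivide existing disk regions of $\Sigma \setminus D^{(0)}$), fully alternating by construction, and weakly prime by an innermost-disk argument that reduces any hypothetical bad decomposition of $(\Sigma, D^{(n)})$ to one of the factors $(\Sigma_i, D_i)$, using their weak primeness together with the essentiality of $\eta$. Its crossing number equals $c(D_1) + c(D_2) + n$. Under Correspondence \ref{C:Links}, $D^{(n)}$ yields a weakly prime alternating virtual link $V^{(n)}$ of crossing number $c(D_1) + c(D_2) + n$, decomposing as $V_1 \# V_2$; by Theorem \ref{T:tait12v}, the $V^{(n)}$ are therefore pairwise distinct, completing the proof.

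The main obstacle is the construction itself: the winding strand must be routed so that (i) $D^{(n)}$ is fully alternating, (ii) it is weakly prime and free of removable nugatory crossings, and (iii) the connect-sum annulus $A^{(n)}$ still meets the link in exactly two points, so the decomposition into $V_1 \# V_2$ genuinely persists. Condition (iii) is the most delicate, as it requires that the wound portion of the bridge arc, after capping off $A^{(n)}$ on each side, does not alter $(\Sigma_i, L_i)$ up to stable equivalence; this can be arranged by confining $\eta$ to a neighborhood of $A^{(n)}$ so that the capping operation simply closes the bridge arc in a standard way. Conditions (i) and (ii) are handled by alternating over/under at each new crossing and by invoking weak primeness of the factors.
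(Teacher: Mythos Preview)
Your approach---build the family of Figure~\ref{Fi:ConnectSumN} as weakly prime, fully alternating connect-sum diagrams with strictly increasing crossing number, then distinguish them via Theorem~\ref{T:tait12v}---is exactly the paper's one-line argument. The only wrinkle is your description of $\eta$: a curve ``confined to a small tubular neighborhood of the connect-sum annulus'' is parallel to the separating circle, so a bridge arc wound along it never meets $D_1$; instead, route the bridge arc around a genuine handle of $\Sigma_1$ (this is where non-classicality enters), and your condition~(iii) still holds because after cutting along the separating annulus and capping off with a disk the extra winding simply unravels back to $(\Sigma_1,L_1)$.
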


This follows immediately from Theorem \ref{T:tait12v}, using the construction suggested in Figure \ref{Fi:ConnectSumN}.  
We conjecture that the same construction works more generally:

\begin{conjecture}
Given any two non-classical, locally {prime}, checkerboard colorable virtual links $K_1$ and $K_2$, there are infinitely many distinct virtual links $K$ with $g(K)=g(K_1)+g(K_2)$ that decompose as a connect sum of $K_1$ and $K_2$.
\end{conjecture}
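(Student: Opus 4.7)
The plan is to apply Theorem \ref{T:tait12v} to a sequence of alternating virtual diagrams representing different connect sums of $V_1$ and $V_2$, distinguished by their crossing numbers. The construction follows Figure \ref{Fi:ConnectSumN}.

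First, I would use Correspondence \ref{C:Diagrams} to pass from $V_1$ and $V_2$ to cellularly embedded fully alternating diagrams $(\Sigma_1,D_1)$ and $(\Sigma_2,D_2)$ on closed surfaces with $g(\Sigma_i)\ge 1$, where the genus condition holds exactly because the $V_i$ are non-classical. For each $n\in\mathbb{N}$, I would then construct a pair $(\Sigma_1\#\Sigma_2,D^{(n)})$ by performing the pairwise connect sum of $(\Sigma_1,D_1)$ and $(\Sigma_2,D_2)$ along a separating annulus, and inserting inside that annulus an alternating tangle of $n$ additional crossings, routed through a nontrivial element of $H_1$ of one of the $\Sigma_i$ so that the extra crossings cannot be undone by isotopy. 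Let $W_n$ be the virtual diagram corresponding to $(\Sigma_1\#\Sigma_2,D^{(n)})$ under Correspondence \ref{C:Diagrams}, and let $\wt{L}_n$ be the virtual link it represents.

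Second, I would verify the three properties needed. Full alternatingness of $(\Sigma_1\#\Sigma_2,D^{(n)})$ is built into the construction, since the inserted tangle is an alternating chain of half-twists that concatenates compatibly with the alternating strands coming out of $D_1$ and $D_2$. Weak primeness reduces to showing that any pairwise connect-sum decomposition $(\Sigma_1\#\Sigma_2,D^{(n)})=(\Sigma_1\#\Sigma_2,E_1)\#(S^2,E_2)$ with $E_2\neq\bigcirc$ forces $(\Sigma_1\#\Sigma_2,E_1)=(S^2,\bigcirc)$; this should follow by analyzing a separating circle realizing the decomposition relative to the connect-sum annulus and the inserted tangle, then invoking weak primeness of $(\Sigma_i,D_i)$. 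Finally, each $\wt{L}_n$ is genuinely a connect sum of $V_1$ and $V_2$, because the connect-sum annulus in $\Sigma_1\#\Sigma_2$ intersects $D^{(n)}$ in exactly two points after a small isotopy that sweeps the inserted tangle onto one side, so $(\Sigma_1\#\Sigma_2,D^{(n)})$ splits as $(\Sigma_1,D_1')\#(\Sigma_2,D_2)$, where $D_1'$ differs from $D_1$ only by local moves on $\Sigma_1$ that do not change the associated virtual link $V_1$ (or vice versa, by symmetry).

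Third, since the diagrams $D^{(n)}$ satisfy $c(D^{(n)})=c(D_1)+c(D_2)+n$, their crossing numbers are strictly increasing, so Theorem \ref{T:tait12v} implies that the virtual links $\wt{L}_n$ are pairwise distinct. This produces the required infinite family of distinct connect sums.

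The main obstacle will be the weak primeness verification in the second step: one must rule out unintended connect-sum decompositions introduced by the interaction between the inserted tangle and the non-separating routing through $\Sigma_i$. This is where the non-classicality hypothesis on each $V_i$ is used crucially, since without positive genus the inserted twists could be removed by flypes and the $W_n$ would collapse to a single virtual link.
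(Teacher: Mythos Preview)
The statement you are attempting to prove is labeled a \emph{conjecture} in the paper precisely because the paper does not prove it; the paper proves only the alternating case, stated separately as Corollary~\ref{C:NonUCS}. The conjecture drops the alternating hypothesis on $V_1$ and $V_2$, and this is exactly where your argument breaks.

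In your first step you write that Correspondence~\ref{C:Diagrams} passes from $V_1$ and $V_2$ to ``cellularly embedded fully alternating diagrams $(\Sigma_i,D_i)$.'' Correspondence~\ref{C:Diagrams} does no such thing: it produces cellularly embedded diagrams, but these are alternating only if the $V_i$ were alternating to begin with. For general weakly prime virtual links there is no reason whatsoever for $D_i$ to be alternating, and hence no reason for the connect-sum diagrams $D^{(n)}$ to be alternating. Consequently Theorem~\ref{T:tait12v}, which concerns only \emph{alternating} diagrams, gives you nothing: you cannot conclude that the $D^{(n)}$ realize the minimal crossing number, so the strictly increasing sequence $c(D^{(n)})$ does not separate the virtual links $\wt{L}_n$. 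Your proposal is, in effect, a reproof of Corollary~\ref{C:NonUCS} rather than a proof of the conjecture; the whole point of the conjecture is that one expects the same construction (Figure~\ref{Fi:ConnectSumN}) to work without alternatingness, but a different invariant---not the alternating crossing-number result---would be needed to distinguish the $\wt{L}_n$.
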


\newpage

\section*{Appendix A: Cross-referencing with \cite{flyping}}

\begin{table}[h!]
\begin{center}
\begin{tabular}{|cc|cc|}
\hline
here& in \cite{flyping}&here& in \cite{flyping}\\ \hline
Prop. \ref{P:BdryParallel}&Prop. 2.5&
Prop. \ref{P:Ess}&Prop. 2.6\\
Obs. \ref{O:ArcW}& Fact 2.7&
Rem. \ref{R:CBEss} & Rem. 2.8\\
Def. \ref{D:Flype} & Def. 2.9&
Obs. \ref{O:Flype} & Obs. 2.10 
\\
Proc. \ref{Proc:ArcCollapse}&Proc. 2.23&
Proc. \ref{Proc:Kill1}& Proc. 2.24\\
Fact \ref{Fact:arccutdef}& Subl. 6.3 &
Fact \ref{F:AddTwist} & Subl. 6.4 \\
Fact \ref{F:Arc0} &Prop. 6.5&
Prop. \ref{P:Kill1} &Prop. 6.8\\
Prop. \ref{P:BdryParallel0} &Prop. 6.9&
Thm. \ref{T:DBW}&Thm. 2.35\\
Cor. \ref{C:DBW}&Cor. 2.36&
Thm. \ref{T:plumb} & Thm. 4.5\\%
Cor. \ref{C:tait}& Cor. 4.6&
Thm. \ref{T:Bad} &Thm. 5.4 \\
\hline
\end{tabular}
\caption{Cross-listing information with \cite{flyping}}
\end{center}
\end{table}


\begin{thebibliography}{199}

\bibitem[Aetal19]{adamsetal} C. Adams, C. Albors-Riera, B. Haddock, Z. Li, D. Nishida, B. Reinoso, L. Wang, {\it Hyperbolicity of links in thickened surfaces}, Topology Appl. 256 (2019), 262-278.
%
\bibitem[AK13]{ak} C. Adams, T. Kindred, {\it A classification of spanning surfaces for alternating links}, Alg. Geom. Topol. 13 (2013), no. 5, 2967--3007.
%
\bibitem[BCK21]{bck21} H. Boden, M. Chrisman, H. Karimi, {\it The Gordon--Litherland pairing for links in thickened surfaces}, arXiv:2107.00426.

\bibitem[BK18]{bk18} H. Boden, H. Karimi, {\it The Jones--Krushkal polynomial and minimal diagrams of surface links}, Ann. Inst. Fourier (Grenoble) 72 (2022), no. 4, 1437-1475.

\bibitem[BK22]{bk20} H. Boden, H. Karimi, {\it A characterization of alternating links in thickened surfaces}, Proc. Roy. Soc. Edinburgh Sect. A, 1--19. doi:10.1017/prm.2021.78

\bibitem[BKS19]{bks19} H. Boden, H. Karimi, A. Sikora, {\it Adequate links in thickened surfaces and the generalized Tait conjectures}, arXiv:2008.09895.


\bibitem[CKS02]{cks02} J.S. Carter, S. Kamada, M. Saito, {\it Stable equivalence of knots on surfaces and virtual knot cobordisms}, J. Knot Theory Ramifications, 11 (2002), no. 3, 311--322.

\bibitem[CSW14]{csw14} J.S. Carter, D.S. Silver, S.G. Williams, {\it Invariants of links in thickened surfaces}, Alg. Geom. Topol. 14 (2014), no. 3, 1377--1394.

\bibitem[CT07]{ct07} D. Cimasoni, V. Turaev, {\it A generalization of several classical invariants of links}, Osaka J. Math. 44 (2007), 531--561.
%
\bibitem[GL78]{gordlith} C. McA. Gordon, R.A. Litherland, {\it On the signature of a link}, Invent. Math. 47 (1978), no. 1, 53--69.
%
%
\bibitem[Gr17]{greene} J. Greene, {\it Alternating links and definite surfaces}, with an appendix by A. Juhasz, M Lackenby, Duke Math. J. 166 (2017), no. 11, 2133--2151.
%
\bibitem[Ho18]{howie} J. Howie, {\it A characterisation of alternating knot exteriors}, Geom. Topol. 21 (2017), no. 4, 2353--2371.
%
\bibitem[HP20]{hp20} J. Howie, J. Purcell, {\it Geometry of alternating links on surfaces}, Trans. Amer. Math. Soc. 373 (2020), no. 4, 2349-2397.
%
\bibitem[Jo82]{joyce} D. Joyce, {\it A classifying invariant of knots, the knot quandle}, J. Pure Appl. Algebra 23 (1982), no. 1, 37-65.
%
\bibitem[KK00]{kaka} N. Kamada, S. Kamada, {\it Abstract link diagrams and virtual knots}, J. Knot Theory Ramifications 9 (2000), no. 1, 93-106.
%
\bibitem[Ka87]{kauff} L.H. Kauffman, {\it State models and the Jones polynomial}, Topology 26 (1987), no. 3, 395--407.
%
\bibitem[Ka98]{kauff98} L.H. Kauffman, {Virtual knot theory}, European J. Combin. 20 (1999), no. 7, 663-690.
%
\bibitem[Ki21]{flyping} T. Kindred, {\it A geometric proof of the flyping theorem}, arXiv:2008.06490.
%
\bibitem[Ki22]{primes} T. Kindred, {\it Primeness of alternating virtual links}, arXiv:2210.03225.
%
\bibitem[Ki23]{endess} T. Kindred, {\it End-essential spanning surfaces for links in thickened surfaces}, arXiv:2210.03218.
%
\bibitem[Ki24]{cromur} T. Kindred, {\it A simple proof of the Crowell-Murasugi theorem}, Alg. Geom. Topol. 24 (2024), no. 5, 2779-2785.
%
%
\bibitem[Ku03]{kup03} G. Kuperberg, {\it What is a virtual link?}, Alg. Geom. Topol. 3 (2003), 587--591. 
%
\bibitem[Ma12]{mat} S.V. Matveev, {\it Roots and decompositions of three-dimensional topological objects}, Russian Math. Surveys 67 (2012), no. 3, 459-507.
%
%
\bibitem[Me84]{men84} W. Menasco, {\it Closed incompressible surfaces in alternating knot and link complements},  Topology 23 (1984), no. 1, 37--44. 
%
\bibitem[MT91]{menthis91} W. Menasco, M. Thistlethwaite, {\it The Tait flyping conjecture},  Bull. Amer. Math. Soc. (N.S.) 25 (1991), no. 2, 403--412. 
%
\bibitem[MT93]{menthis93} W. Menasco, M. Thistlethwaite, {\it The classification of alternating links},  Ann. of Math. (2) 138 (1993), no. 1, 113--171. 
%
\bibitem[Mu87]{mur} K. Murasugi, {\it Jones polynomials and classical conjectures in knot theory}, Topology 26 (1987), no. 2, 187--194.
%
\bibitem[Mu87ii]{mur87ii} K. Murasugi, {\it Jones polynomials and classical conjectures in knot theory II}, Math. Proc. Cambridge Philos. Soc. 102 (1987), no. 2, 317--318.
%
\bibitem[Oz06]{oz06} M. Ozawa, {\it Nontriviality of generalized alternating knots}, J. Knot Theory Ramifications 15 (2006), no. 3, 351-360.
%
\bibitem[PT22]{pt22} J. Purcell, A. Tsvietkova, {\it Standard position for surfaces in link complements in arbitrary 3-manifolds}, arXiv:2205.06368.
%
\bibitem[Ta1898]{tait} P.G. Tait, {\it On Knots I, II, and III}, Scientific papers 1 (1898), 273--347.
%
\bibitem[Th87]{this} M.B. Thistlethwaite, {\it A spanning tree expansion of the Jones polynomial}, Topology 26 (1987), no. 3, 297--309.
%
%
\bibitem[T88b]{this88a} M.B. Thistlethwaite, {\it Kauffman's polynomial and alternating links}, Topology 27 (1988), no. 3, 311-318.
%
%
\bibitem[Tu87]{tur} V.G. Turaev, {\it A simple proof of the Murasugi and Kauffman theorems on alternating links}, Enseign. Math. (2) 33 (1987), no. 3--4, 203--225.%
%
 \bibitem[Ya14]{yas} A. Yasuhara, {\it An elementary proof that all unoriented spanning surfaces of a link are related by attaching/deleting tubes and Mobius bands}, J. Knot Theory Ramifications 23 (2014), no. 1, 5 pp.
\end{thebibliography}
\end{document}